\DeclareOldFontCommand{\rm}{\normalfont\rmfamily}{\mathrm}
\def\F{\Bbb F}
\def\N{\Bbb N}
\def\ad{\operatorname{ad}}
\def\Der{\operatorname{Der}}
\def\dim{\operatorname{dim}}
\def\Hom{\operatorname{Hom}}
\def\Ker{\operatorname{Ker}}
\def\Id{\operatorname{Id}}
\def\Span{\operatorname{Span}}
\def\Im{\operatorname{Im}}
\def\g{\frak g}
\def\gl{\frak{gl}}
\def\h{\frak h}
\def\s{\frak{s}}
\def\a{\frak{a}}
\def\p{\frak{p}}
\theoremstyle{plain}\swapnumbers
\newtheorem{Theorem}{Theorem}[section]
\newtheorem{Lemma}[Theorem]{Lemma}
\newtheorem{Prop}[Theorem]{Proposition}
\newtheorem{Cor}[Theorem]{Corollary}
\newtheorem{Remark}[Theorem]{Remark}
\newtheorem{claim}[Theorem]{Claim}
\newtheorem*{Lemma A}{Lemma A}
\newtheorem*{Lema B}{Lemma A}
\newtheorem*{Theorem A}{Theorem A}
\title[Inductive description of quadratic Hom-Lie algebras]{Inductive description of quadratic Hom-Lie algebras with twist maps in the centroid}
\author{R. Garc\'ia-Delgado}
\address{Centro de Investigaci\'on en Matem\'aticas, A.C., 
Unidad M\'erida; Carretera Sierra Papacal Chuburna Puerto Km 5, 97302, Yucat\'an, M\'exico.}
\email{rosendo.garciadelgado@gmail.com,rosendo.garcia@cimat.mx}
\keywords {Hom Lie algebras; Invariant forms; 
Simple Lie algebras; Centroid; (non-Lie) Hom algebras and topics}
\subjclass{
Primary:
17B61, 
17A30. 
Secondary:
17B60, 
17D30. 
}
\date{\today}
\begin{document}

\maketitle

\begin{abstract}
In this work we give an inductive way to construct quadratic Hom-Lie algebras with twist maps in the centroid. We focus on those Hom-Lie algebras which are not Lie algebras. We prove that a Hom-Lie algebra of this type has trivial center and its twist map is nilpotent. We show that there exists a maximal ideal containing the kernel and the image of the twist map. Then we state an inductive way to construct this type of Hom-Lie algebras, similar to the double extension procedure for Lie algebras, and prove that any indecomposable quadratic Hom-Lie algebra with nilpotent twist map in the centroid, which is not a Lie algebra, can be constructed using this type of double extension. 
\end{abstract}

\section*{Introduction}

A Hom-Lie algebra is a triple $(\g,[\,\cdot\,,\,\cdot\,],T)$, where $\g$ is a vector space over a field $\F$, $[\,\cdot\,,\,\cdot\,]$ is a skew-symmetric bilinear map and $T:\g \to \g$ is a linear map satisfying the \emph{Hom-Lie Jacobi identity}:
$$
[T(x),[y,z]]+[T(y),[z,x]]+[T(z),[x,y]]=0,
$$
for all $x,y,z$ in $\g$. The bilinear map $[\,\cdot\,,\,\cdot\,]$ is called \emph{bracket} and $T$ is called the \emph{twist map}. When the context is clear, we refer to a Hom-Lie algebra by $\g$. The study of Hom-Lie algebras begins in \cite{HLS}, which were motivated by quantum deformations of algebras of vector fields. 
\smallskip

If a Hom-Lie algebra $\g$ admits a non-degenerate and symmetric bilinear form $B:\g \times \g \to \F$, such that $B$ is \emph{invariant}: $B(x,[y,z])=B([x,y],z)$ and $T$ is self adjoint: $B(T(x),y)=B(x,T(y))$, then $\g$ is said to be \emph{quadratic} and $B$ is called an \emph{invariant metric}. A quadratic Hom-Lie algebra is denoted by $(\g,[\,\cdot\,,\,\cdot\,],T,B)$. We recover classical quadratic Lie algebras when $\g$ is a quadratic Hom-Lie algebra for $T=\Id_{\g}$. Quadratic Hom-Lie algebras where the twist map $T$ is a morphism, that is $T([x,y])=[T(x),T(y)]$, were studied in \cite{Ben}.
\smallskip

The Medina-Revoy theorem and the double extension give a systematic way to study quadratic non-semisimple Lie algebras (see \cite{Med}). The Medina-Revoy theorem proves that an indecomposable quadratic non-simple Lie algebra can be obtained by a double extension of a quadratic Lie algebra of smaller dimension (see \cite{Med}). Another approach appears in \cite{Kath}, where the authors define two canonical ideals for a quadratic Lie algebra without simple ideals and construct certain cohomology set to obtain classification results. In addition, in conformal field theory, a Sugawara construction exists for quadratic Lie algebras (see \cite{Figueroa}).
\smallskip

In this work we assume that the twist map $T$, of a quadratic Hom-Lie algebra $(\g,[\,\cdot\,,\,\cdot\,],T,B)$, belongs to the \emph{centroid}, that is: $T([x,y])=[T(x),y]$, for all $x,y$ in $\g$. In \cite{Central}, is obtained a characterization for these Hom-Lie algebras, which are not Lie algebras, by using central extensions of classical quadratic Lie algebras (see \textbf{Thm. 2.2} in \cite{Central}). 
\smallskip

For classical quadratic Lie algebras, the space generated by self-adjoint centroids plays an important role to knowing the algebraic structure. If this vector space is one-dimensional then the Lie algebra is simple (see \cite{Bajo 2}). In \cite{Bajo}, it is shown that if this vector space is two-dimensional, then the Lie algebra has only one maximal ideal.
\smallskip

Quadratic Hom-Lie algebras with twist map in the centroid solve the following problem: \emph{Under what conditions a central extension of a quadratic Lie algebra $\g$ admits an invariant metric?}. The answer given in \cite{Central} is that such a central extension admits an invariant metric if and only if $\g$ admits a quadratic Hom-Lie algebra structure with twist map in the centroid (see \textbf{Thm. 1.2} and \textbf{Thm. 3.1} in \cite{Central}).
\smallskip

In \cite{Central}, some examples of such Hom-Lie algebras that are not Lie algebras are given. There seems to be interest in obtaining similar examples for other types of Hom-Lie algebras as well (see \cite{Das}, \textbf{Example} 2.4). 
\smallskip

At this point, the question arises whether there is an inductive way to construct such quadratic Hom-Lie algebras, similar to the procedure called ``double extension". If so, another question that arises is whether any \emph{indecomposable} quadratic Hom-Lie algebra with twist map in the centroid, can be obtained by means of this type of double extension, similar to Medina-Revoy theorem. In this work we prove that both questions have an affirmative answer (see \textbf{Thm. \ref{teorema}} and \textbf{Cor. \ref{corolario}}). 
\smallskip

To deal with Hom-Lie algebras that are not Lie algebras, we use the Fitting's lemma to show that the twist map $T$ is nilpotent (see \textbf{Lemma \ref{T es nilpotente}}). Next we use some results of \cite{Central}, to prove that if the bracket $[\,\cdot\,,\,\cdot\,]$ does not satisfy the classical Jacobi identity for Lie algebras, then $\g=[\g,\g]$ (see \textbf{Prop. \ref{g es perfecta}}). We will show that there exists a maximal ideal $\mathcal{I}$ containing the kernel and the image of $T$, and for any subspace $\s$ satisfying $\g=\s \oplus \mathcal{I}$, the bracket $[\,\cdot\,,\,\cdot\,]$ of $\g$ induces a simple Lie algebra structure on $\s$ (see \textbf{Prop. \ref{ideal maximal}}). 
\smallskip

Unlike double extension for classical Lie algebras, the subspace $\s$ is not a subalgebra of $\g$ (see \textbf{Prop. A.1} in \cite{Figueroa} and \textbf{Example} in \S 3 below). However we can use the result in \textbf{Prop. A.1} of \cite{Figueroa} to simplify some results obtained here, as follows: We construct a Lie algebra in the space $\p=\s \oplus T(\mathcal{I})$, in such a way that $\p$ is the semidriect sum of $\s$ and $T(\mathcal{I})$. This implies that if $\Lambda:\s \times \s \to \mathcal{I}$ is the bilinear map for which $[x,y]=[x,y]_{\s}+\Lambda(x,y)$, where $[x,y]_{\s}$ is in $\s$, for all $x,y$ in $\s$, then $T \circ \Lambda=0$ (see \textbf{Lemma \ref{T y Lambda}}). This result simplifies some data needed to state the double extension procedure for these Hom-Lie algebras. In \S 3, we provide an example of a quadratic Hom-Lie algebra with twist map in the centroid, which is not a Lie algebra, and $\s$ is not a subalgebra. All the vector spaces considered in this work are finite dimensional over a unique field $\F$ of zero characteristic.

\section{Quadratic Hom-Lie algebras with twist maps in the centroid}

Let $(\g,[\,\cdot\,,\,\cdot\,],T)$ be a Hom-Lie algebra over a field $\F$. We denote by $\ad(x):\g \to \g$, the map $\ad(x)(y)=[x,y]$, for all $x,y$ in $\g$. A subspace $I \subset \g$ is an \emph{ideal} of $\g$ if $T(I) \subset I$ and $[\g,I] \subset I$. 
\smallskip

A linear map $D:\g \to \g$ is a \emph{derivation} if $D([x,y])=[D(x),y]+[x,D(y)]$, for all $x,y$ in $\g$. We denote by $\Der(\g)$, the vector space generated by derivations of $\g$.
\smallskip

Let $B$ be a bilinear form on $\g$. A linear map $f:\g \to \g$, is \emph{skew-symmetric} with respect to $B$, if $B(f(x),y)=-B(x,f(y))$, for all $x,y$ in $\g$. We denote by $\mathfrak{o}(B)$ the vector space generated by skew-symmetric maps of $\g$ with respect to $B$..
\smallskip

If $(\g,[\,\cdot\,,\,\cdot\,],T)$ is a Hom-Lie algebra with twist map $T$ in the centroid, then the cyclic sum $[x,[y,z]]+[y,[z,x]]+[z,[x,y]]$ belongs to $\Ker(T)$, for all $x,y,z$ in $\g$. Thus if $T$ is invertible then $(\g,[\,\cdot\,,\,\cdot\,])$ is a Lie algebra. In the next result we prove that for Hom-Lie algebras thT are not Lie algebras, the twist map must be nilpotent. 

\begin{Prop}\label{T es nilpotente}{\sl
Let $(\g,[\,\cdot\,,\,\cdot\,],T)$ be a Hom-Lie algebra
with twist map $T$ in the centroid. Then $\g$ is the direct sum of a Lie algebra and a Hom-Lie algebra with nilpotent twist map in the centroid.}
\end{Prop}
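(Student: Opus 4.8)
The plan is to apply Fitting's lemma to the twist map $T$. Since $T$ is in the centroid of $(\g,[\,\cdot\,,\,\cdot\,],T)$ and $\g$ is finite dimensional, Fitting's lemma gives a $T$-invariant decomposition $\g = \g_0 \oplus \g_1$, where $\g_0 = \bigcup_{n\geq 1}\Ker(T^n)$ is the generalized nullspace (on which $T$ is nilpotent) and $\g_1 = \bigcap_{n \geq 1}\Im(T^n)$ is the generalized eigenspace for nonzero eigenvalues (on which $T$ restricts to an invertible map). The first thing to check is that $\g_0$ and $\g_1$ are not merely $T$-invariant subspaces but genuine ideals of the Hom-Lie algebra, and moreover that $[\g_0,\g_1]=0$, so that the decomposition is a direct sum of Hom-Lie algebras.

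The key computational input is that $T$ lies in the centroid, i.e. $T([x,y])=[T(x),y]=[x,T(y)]$ (the last equality because $[\,\cdot\,,\,\cdot\,]$ is skew-symmetric and $T([x,y]) = [T(x),y]$ forces $[T(x),y] = -[y,T(x)] = -T([y,x]) = T([x,y]) = [x,T(y)]$ after using the centroid property on the pair $(y,x)$; more directly, $T[x,y]=[Tx,y]$ and $T[y,x]=[Ty,x]$ give $[Tx,y]=[x,Ty]$). Consequently $T^n([x,y]) = [T^n(x),y]$ for all $n$. First I would show $[\g_0,\g_1]=0$: take $x\in\g_0$ with $T^m(x)=0$ and $y\in\g_1$; then $T^m([x,y]) = [T^m(x),y]=0$, so $[x,y]\in\g_0$; but also writing $y = T^k(y')$ for $y'\in\g_1$ (possible since $T$ is surjective on $\g_1$, hence $\g_1 = T^k(\g_1)$ for all $k$), we get $[x,y] = [x,T^k(y')] = T^k([x,y'])\in \Im(T^k)$ for every $k$, hence $[x,y]\in\g_1$. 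Since $\g_0\cap\g_1=0$, we conclude $[x,y]=0$. Then $\g_0$ and $\g_1$ are each closed under the bracket: for $x,y\in\g_0$, $[x,y]\in\g_0$ follows because $T^N([x,y])=[T^N(x),y]$ and iterating the centroid identity enough times with $T$ nilpotent on $\g_0$ kills it (more carefully: $T^n[x,y] = [T^n x, y]$, and choosing $n$ with $T^n x = 0$ shows $[x,y]\in\bigcup\Ker(T^n)=\g_0$); and for $x,y\in\g_1$, $[x,y] = [T^k x', y] = T^k([x',y])\in\Im(T^k)$ for all $k$, so $[x,y]\in\g_1$. Thus $\g_0$ and $\g_1$ are ideals, $\g=\g_0\oplus\g_1$ as Hom-Lie algebras, $T|_{\g_0}$ is nilpotent, and $T|_{\g_1}$ is invertible; since the twist map restricted to an ideal still lies in the centroid of that ideal, and an invertible centroid twist map forces the Hom-Lie Jacobi identity to collapse to the ordinary Jacobi identity (as noted in the paragraph preceding the statement, the cyclic sum lies in $\Ker(T|_{\g_1})=0$), $\g_1$ is a Lie algebra.

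I do not expect a serious obstacle here; the only point requiring a little care is the verification that the Fitting components are ideals rather than merely invariant subspaces, and this is exactly where the centroid hypothesis (as opposed to $T$ being merely a morphism or an arbitrary linear map) is used in an essential way — it is what lets $T^n$ pass through the bracket in one slot at a time. One should also confirm that the invariant metric $B$, if present, restricts nondegenerately and orthogonally to $\g_0$ and $\g_1$ (since $T$ is self-adjoint, $\g_0$ and $\g_1$ are $B$-orthogonal, being generalized eigenspaces of a self-adjoint operator for distinct eigenvalues), although the statement as phrased is only about the Hom-Lie structure and not the quadratic structure, so this remark is for the downstream results rather than for this proof.
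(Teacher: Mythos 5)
Your proposal is correct and follows essentially the same route as the paper: both apply Fitting's lemma to $T$ to split $\g$ into the stable image (where $T$ is invertible, forcing the cyclic Jacobi sum into $\Ker(T)=\{0\}$, hence a Lie algebra) and the stable kernel (where $T$ is nilpotent). The only difference is that you spell out the verification that the Fitting components are ideals with $[\g_0,\g_1]=0$, which the paper asserts in one line from the centroid property.
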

\begin{proof}
By Fitting's lemma, there exists an integer $\ell$ such that: 
\begin{equation}\label{suma directa}
\g=\Im(T^{\ell}) \oplus \Ker(T^{\ell}),
\end{equation}
where $\Im(T^{\ell})=\Im(T^{\ell+k})$ and $\Ker(T^{\ell})=\Ker(T^{\ell+k})$ for all $k$ in $\N$. 
Due to $T$ is in the centroid, $\Im(T^{\ell})$ and $\Ker(T^{\ell})$ are ideals of $\g$. 
\smallskip

Since $T\vert_{\Im\left(T^{\ell}\right)}$ is invertible, 
the Hom-Lie algebra structure in $\Im(T^{\ell})$, induced by $\g$, 
is that of a Lie algebra.
On the other hand, $T\vert_{\Ker\left(T^{\ell}\right)}$ is nilpotent, thus from \eqref{suma directa} we deduce that $\g$ is the direct sum of the Lie algebra $\Im(T^{\ell})$ and the Hom-Lie algebra $(\Ker(T^{\ell}),[\,\cdot\,,\,\cdot\,],T\vert_{\Ker(T^{\ell})})$ with nilpotent twist map $T\vert_{\Ker(T^{\ell})}$ in the centroid. Further, if $\g$ admits an invariant metric, the decomposition \eqref{suma directa} corresponds to an orthogonal direct sum of $\Im(T^{\ell})$ and $\Ker(T^{\ell})$. 
\end{proof}

By \textbf{Prop. \ref{T es nilpotente}}, we assume that the twist map $T$ is nilpotent. If $\g$ admits an invariant metric and $[\,\cdot\,,\,\cdot\,]$ does not satisfy the Jacobi identity for Lie algebras, we can make another simplification.

\begin{Prop}\label{g es perfecta}{\sl
Let $(\g,[\,\cdot\,,\,\cdot\,],T,B)$ be a quadratic Hom-Lie algebra with nilpotent twist map $T$ in the centroid. If $[\,\cdot\,,\,\cdot\,]$ does not satisfy the Jacobi identity, then $\g=[\g,\g]$.}
\end{Prop}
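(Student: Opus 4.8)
The plan is to argue by contradiction: suppose $\g \neq [\g,\g]$. Since $B$ is non-degenerate and invariant, the orthogonal complement $[\g,\g]^{\perp}$ is nontrivial and equals the space $\{x \in \g : B(x,[\g,\g]) = 0\} = \{x : B([x,\g],\g)=0\}$, which by non-degeneracy is exactly the center $\z(\g) = \{x : [x,\g]=0\}$. So the hypothesis $\g \neq [\g,\g]$ forces $\z(\g) \neq 0$. The strategy is then to show that a nonzero center, together with $T$ nilpotent in the centroid, forces the bracket to satisfy the classical Jacobi identity, contradicting the hypothesis.

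First I would record the basic identity noted in the excerpt just before \textbf{Prop. \ref{T es nilpotente}}: for a twist map in the centroid, the classical Jacobian $J(x,y,z) := [x,[y,z]]+[y,[z,x]]+[z,[x,y]]$ lies in $\Ker(T)$ for all $x,y,z$. So the bracket fails Jacobi exactly when $J$ is not identically zero, and its image lies in $\Ker T$. Next I would exploit invariance of $B$ to transfer information about $J$ into information about the center. Using invariance repeatedly, $B(J(x,y,z),w)$ can be rewritten as an alternating expression in $B([\,\cdot,\cdot\,],[\,\cdot,\cdot\,])$; the key point is that $B(J(x,y,z),w)$ is totally skew-symmetric in all four arguments $x,y,z,w$ (this is the standard fact that the trilinear form $(x,y,z)\mapsto B(x,[y,z])$ being invariant makes $B(J(x,y,z),w)$ a $4$-form). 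Therefore, if $\Im J \subseteq \Ker T$ and also, say, $\Im J$ pairs trivially with enough of $\g$, we get $J=0$.

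The main obstacle — and the heart of the argument — is to show $\Im J$ pairs trivially with all of $\g$, i.e. $B(J(x,y,z),w)=0$ for all $x,y,z,w$. Here I would bring in $T$ self-adjoint and nilpotent together with the centroid property. Since $T$ is in the centroid, $\Ker T$ and $\Im T$ are ideals, and self-adjointness gives $(\Im T)^{\perp} = \Ker T$; combined with the decomposition philosophy of \textbf{Prop. \ref{T es nilpotente}} (here $T$ is already nilpotent, so $\g = \Ker(T^\ell)$ eventually, but at finite stages the Fitting filtration by $\Ker T^k$ is the right tool), one shows that $J$ takes values in $\Ker T$, and then that any $4$-form $B(J(\cdot,\cdot,\cdot),\cdot)$ supported on an ideal $\Ker T$ on which $B$ restricts compatibly must vanish because of an antisymmetrization-plus-nilpotency argument: feeding a $T$-string into the totally skew $4$-form and using $B(Tx,y)=B(x,Ty)$ to "move $T$ across," one gets the form equals a scaled version of itself, forcing it to be zero since $T$ is nilpotent. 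This yields $B(J(x,y,z),w)=0$ for all arguments, hence $J=0$ by non-degeneracy, contradicting the failure of Jacobi. Therefore $\z(\g)=0$ and $\g=[\g,\g]$.

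I expect the delicate point to be making the "$J$ has values in $\Ker T$ and is a totally skew $4$-form $\Rightarrow$ $J=0$" step fully rigorous; one likely needs the results cited from \cite{Central} (e.g.\ the description of these Hom-Lie algebras via central extensions) to pin down how $\Ker T$, $\Im T$ and the center interact, rather than a purely formal manipulation. In particular, I would look to \cite{Central} for the fact that the center of such a (non-Lie) quadratic Hom-Lie algebra relates to $\Im T$ or $\Ker T$ in a way that closes the argument.
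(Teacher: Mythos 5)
Your opening reduction is correct and coincides with the paper's closing step: invariance and non-degeneracy of $B$ give $[\g,\g]^{\perp}=\{x: B([x,\g],\g)=0\}=\z(\g)$, so the proposition is equivalent to the Hom-Lie algebra having trivial center. But the heart of the argument --- that failure of the Jacobi identity forces $\z(\g)=\{0\}$ --- is never actually carried out in your proposal, and the mechanism you sketch for it would not work. The $4$-form $(x,y,z,w)\mapsto B(J(x,y,z),w)$ is indeed totally antisymmetric, and since $J$ takes values in $\Ker(T)=(\Im T)^{\perp}$ (self-adjointness) it vanishes whenever one argument lies in $\Im(T)$; but this only says the form descends to $\wedge^{4}\bigl(\g/\Im(T)\bigr)$, which has no reason to vanish. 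Indeed no unconditional argument can conclude $J=0$, since $J\neq 0$ is the standing hypothesis of the proposition; the only extra datum in your contradiction setup is a single nonzero central element $c$, and all that gives is $J(x,y,c)=0$, i.e.\ the $4$-form vanishes when an argument is central --- far from forcing $J=0$. There is also no ``$T$-string'' rescaling identity to exploit: one computes directly from the centroid property that $J(T(x),y,z)=[T(x),[y,z]]+[T(y),[z,x]]+[T(z),[x,y]]=0$ already, so feeding $T$ into the form just reproduces the vanishing on $\Im(T)$ you already have.

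The paper closes exactly this gap with an external input you did not identify: \textbf{Prop.~1.3} of \cite{Central}. One passes to the associated quadratic \emph{Lie} algebra $[x,y]_{\text{Lie}}=T([x,y])$ on the same space, chooses a complement $\a=\Span_{\F}\{a_1,\dots,a_r\}$ of $\Im(T)$ with $r=\dim\Ker(T)>0$, and notes that each $D_i=[a_i,\,\cdot\,]$ is a derivation of $[\,\cdot\,,\,\cdot\,]_{\text{Lie}}$. The cited result states precisely that when $[\,\cdot\,,\,\cdot\,]$ fails Jacobi, there are elements $x_i$ such that the modified derivations $D_i'=D_i+\ad_{\text{Lie}}(x_i)=[a_i,\,\cdot\,]+[T(x_i),\,\cdot\,]$ satisfy $\Ker(D_1')\cap\dots\cap\Ker(D_r')=\{0\}$; any $x$ with $[x,\g]=\{0\}$ lies in every $\Ker(D_i')$ and hence is zero. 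So the trivial-center statement is not a formal consequence of antisymmetry, self-adjointness and nilpotency; if you want a self-contained proof you would need to reprove that result from \cite{Central}, not the central-extension characterization you point to.
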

\begin{proof}
Let $r=\dim(\Ker(T))>0$, and consider the bilinear map $[\,\cdot\,,\,\cdot\,]_{\text{Lie}}$ in $\g$ defined by $[x,y]_{\text{Lie}}=T([x,y])$, for all $x,y$ in $\g$. Then $(\g,[\,\cdot\,,\,\cdot\,]_{\text{Lie}},B)$ is a quadratic Lie algebra (see \textbf{Prop. 1.1} of \cite{Central}). 
\smallskip

Let $\a=\Span_{\F}\{a_1,\ldots,a_r\}$ be an $r$-dimensional subspace such that $\g=\a \oplus \Im(T)$. For each $i$, let $D_i:\g \to \g$ be defined by $D_i(x)=[a_i,x]$. Then $D_i$ is a derivation of $(\g,[\,\cdot\,,\,\cdot\,]_{\text{Lie}})$ (see \textbf{Prop. 1.3} in \cite{Central}).
\smallskip

By hypothesis, the bracket $[\,\cdot\,,\,\cdot\,]$ does not satisfy the Jacobi identity. Under such assumption, \textbf{Prop. 1.3} of \cite{Central}, proves that for each $i$, there exists an element $x_i$ in $\g$ such that the derivations $D^{\prime}_i=D_i+\ad_{\text{Lie}}(x_i)$, satisfy $\Ker(D^{\prime}_1)\cap \ldots \cap \Ker(D^{\prime}_r)=\{0\}$. On the other hand, due to $T$ is in the centroid of $[\,\cdot\,,\,\cdot\,]$, we have: $\ad_{\text{Lie}}(x_i)=[T(x_i),\,\cdot\,]$, for all $i$.
\smallskip

If $x$ satisfies $[x,\g]=\{0\}$, then $D^{\prime}_i(x)=[a_i,x]+[T(x_i),x]=0$ for all $i$, hence $x$ belongs to $\Ker(D^{\prime}_1)\cap \ldots \cap \Ker(D^{\prime}_r)=\{0\}$. Thus the Hom Lie algebra $(\g,[\,\cdot\,,\,\cdot\,],T)$ has trivial center. Since $B$ is non-degenerate and invariant under $[\,\cdot\,,\,\cdot\,]$, it follows that $[\g,\g]^{\perp}=\{0\}$, and $\g=[\g,\g]$. 
\end{proof}

In the next result we prove that the Hom-Lie algebra $\g$ has a maximal ideal containing the kernel and the image of the twist map.

\begin{Prop}\label{ideal maximal}{\sl
Let $(\g,[\,\cdot\,,\,\cdot\,],T)$ be a Hom-Lie algebra with nilpotent twist map $T$ in the centroid. Then:
\smallskip

\textbf{(i)} There is a maximal ideal $\mathcal{I}$ of $\g$ such that $\Ker(T)+\Im(T) \subset \mathcal{I}$.
\smallskip

\textbf{(ii)} If $\,\g$ admits an invariant metric and $[\,\cdot\,,\,\cdot\,]$ does not satisfy the Jacobi identity, then the Hom-Lie algebra structure in $\g/\mathcal{I}$, induced by $\g$, is of a simple Lie algebra.}
\end{Prop}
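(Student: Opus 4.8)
The plan is to obtain the maximal ideal in part (i) as an enlargement of $\Ker(T)+\Im(T)$, and then to deduce part (ii) from the maximality of $\mathcal I$ together with the inclusions $\Ker(T)\subseteq\mathcal I$ and $\Im(T)\subseteq\mathcal I$. For (i), I would first record that, because $[\,\cdot\,,\,\cdot\,]$ is skew-symmetric, the centroid condition $T([x,y])=[T(x),y]$ also yields $T([x,y])=[x,T(y)]$; from these two identities it follows at once that $\Ker(T)$ and $\Im(T)$ are ideals of $\g$, hence so is $\mathcal J:=\Ker(T)+\Im(T)$. The key point is that $\mathcal J$ is a proper subspace: since $T$ is a nonzero nilpotent endomorphism, taking $m\geq 2$ minimal with $T^{m}=0$ and $w$ with $T^{m-1}(w)\neq 0$, the vector $T^{m-1}(w)$ lies in $\Ker(T)\cap\Im(T)$, so $\dim\mathcal J=\dim\g-\dim\bigl(\Ker(T)\cap\Im(T)\bigr)<\dim\g$. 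As $\dim\g<\infty$, I may then choose an ideal $\mathcal I$ of largest dimension among the nonempty family of proper ideals of $\g$ containing $\mathcal J$; any ideal lying strictly between $\mathcal I$ and $\g$ would be a proper ideal of larger dimension, so $\mathcal I$ is a maximal ideal with $\Ker(T)+\Im(T)\subseteq\mathcal I$.

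For (ii), let $\pi\colon\g\to\g/\mathcal I$ be the quotient map and $\bar T$ the induced twist map. Since $\Im(T)\subseteq\mathcal I$ we have $\bar T=0$, and since the cyclic sum $[x,[y,z]]+[y,[z,x]]+[z,[x,y]]$ always lies in $\Ker(T)\subseteq\mathcal I$ (as recalled just before Prop.\ \ref{T es nilpotente}), the induced bracket on $\g/\mathcal I$ satisfies the classical Jacobi identity, so $\g/\mathcal I$ is an ordinary Lie algebra. Next, its ideals correspond exactly to the Hom-Lie ideals of $\g$ lying between $\mathcal I$ and $\g$: if $\bar J$ is a Lie ideal of $\g/\mathcal I$, then $J:=\pi^{-1}(\bar J)$ contains $\mathcal I$, satisfies $[\g,J]\subseteq J$, and, since $T(\g)=\Im(T)\subseteq\mathcal I\subseteq J$, also $T(J)\subseteq J$; hence $J$ is a Hom-Lie ideal of $\g$, and maximality of $\mathcal I$ forces $J=\mathcal I$ or $J=\g$, i.e.\ $\g/\mathcal I$ has no ideal other than $0$ and itself. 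Finally, since $\g$ is quadratic with nilpotent twist map in the centroid and $[\,\cdot\,,\,\cdot\,]$ does not satisfy the Jacobi identity, Prop.\ \ref{g es perfecta} gives $\g=[\g,\g]$, hence $[\g/\mathcal I,\g/\mathcal I]=\pi([\g,\g])=\g/\mathcal I$; in particular $\g/\mathcal I\neq 0$ (because $\mathcal I\neq\g$) and is non-abelian, and a nonzero non-abelian Lie algebra whose only ideals are $0$ and itself is simple. This is (ii); note that the argument applies to any maximal ideal containing $\Ker(T)+\Im(T)$, so uniqueness of $\mathcal I$ is not needed.

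The only step that genuinely uses the hypotheses is the properness of $\mathcal J=\Ker(T)+\Im(T)$, which guarantees a maximal ideal above it; this is exactly where nilpotency (and nontriviality) of $T$ is needed, and for a twist map that is not nilpotent one would first restrict to the nilpotent Fitting summand as in Prop.\ \ref{T es nilpotente}. The remainder is bookkeeping: the correspondence between ideals of $\g$ above $\mathcal I$ and ideals of the quotient, together with the two facts already established: that the Jacobi cyclic sum lands in $\Ker(T)$, and that $\g=[\g,\g]$ in the quadratic non-Lie case.
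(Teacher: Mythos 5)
Your proof is correct, and part (i) is essentially the paper's argument (the paper also takes a maximal element of the family of proper ideals containing $\Ker(T)+\Im(T)$; your explicit verification that this subspace is proper, via a nonzero vector $T^{m-1}(w)\in\Ker(T)\cap\Im(T)$ and rank--nullity, just fills in the paper's one-line remark, and like the paper it tacitly assumes $T\neq 0$). In part (ii) you take a genuinely different, and in one respect cleaner, route. The paper chooses a vector-space complement $\s$ with $\g=\s\oplus\mathcal{I}$, shows that the $\s$-component of the bracket makes $\s$ a Lie algebra (Jacobiator in $\Ker(T)\subset\mathcal{I}$), proves simplicity of $\s$ by promoting any $[\,\cdot\,,\,\cdot\,]_{\s}$-invariant subspace $J$ to the ideal $J\oplus\mathcal{I}$ of $\g$, and separately excludes $\dim(\g/\mathcal{I})=1$ by an explicit computation with a functional $\omega$ on a putative one-dimensional ideal $\mathcal{I}^{\perp}=\F\,T(X)$. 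You instead work directly in the quotient $\g/\mathcal{I}$, use the standard ideal correspondence together with $\Im(T)\subseteq\mathcal{I}$ to see that $\g/\mathcal{I}$ has no proper nonzero ideals, and then observe that perfectness ($\g=[\g,\g]$ from Prop.~\ref{g es perfecta}) passes to the quotient, which simultaneously rules out the zero and the one-dimensional (abelian) cases; this makes the paper's $\omega$-computation unnecessary. The trade-off is that the paper's complement-based formulation is not redundant in context: the subspace $\s$, the decomposition $[x,y]=[x,y]_{\s}+\Lambda(x,y)$, and the simplicity of $(\s,[\,\cdot\,,\,\cdot\,]_{\s})$ are reused heavily in Lemma~\ref{T y Lambda} and throughout \S 2, so the paper is doing setup work beyond what the bare statement requires, whereas your argument proves exactly the stated claim with less machinery.
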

\begin{proof} 
$\,$

\textbf{(i)} Observe that $\g \neq \Ker(T)+\Im(T)$, because $T$ is nilpotent. Let $\mathcal{F}\!=\!\{I \subsetneq \g\mid I \text{ is an ideal containing } \Ker(T)+\Im(T)\}$. Then $\mathcal{F}$ is not empty, being that $\Ker(T)+\Im(T)$ belongs to it. A maximal element $\mathcal{I}$ of $\mathcal{F}$ satisfies the conditions required.
\smallskip

\textbf{(ii)} Suppose $\g$ admits an invariant metric and $[\,\cdot\,,\,\cdot\,]$ does not satisfy the Jacobi identity. By \textbf{Prop. \ref{g es perfecta}}, $\g=[\g,\g]$. If $\dim(\g/\mathcal{I})=1$, then $\mathcal{I}^{\perp}$ is a one-dimensional ideal contained in $\Ker(T)\cap \Im(T)$, for $\mathcal{I} \supset \Ker(T)+\Im(T)$. Let $\mathcal{I}^{\perp}=\F \,T(X)$, where $X$ is in $\g$. There is a map $\omega:\g \to \F$ such that $[T(X),Y]=\omega(Y)T(X)$, for all $Y$ in $\g$. Using the Hom-Lie Jacobi identity and that $T$ is in the centroid, we have:
$$
\aligned
& \omega([Y,Z])T(X)=[T(X),[Y,Z]]=[[T(X),Y],Z]+[Y,[T(X),Z]]\\
&=\omega(Y)\omega(Z)T(X)-\omega(Z)\omega(Y)T(X)=0,\,\text{ for all }Y,Z \in\g.
\endaligned
$$
Therefore $\omega([\g,\g])=\{0\}$ and $\omega=0$, because $\g=[\g,\g]$. Then $[T(X),\g]=\{0\}$, but $\g$ has trivial center, hence $\dim(\g/\mathcal{I})>1$.
\smallskip

Let $\s$ be a subspace of $\g$ such that $\g=\s \oplus \mathcal{I}$. For each pair $x,y$ in $\s$, there are $[x,y]_{\s}$ in $\h$ and $\Lambda(x,y)$ in $\mathcal{I}$ such that:
\begin{equation}\label{2}
[x,y]=[x,y]_{\s}+\Lambda(x,y).
\end{equation} 
Let $[\,\cdot\,,\,\cdot\,]_{\s}:\s \times \s \to \s$ be the map $(x,y)\mapsto [x,y]_{\s}$, and $\Lambda:\s \times \s \to \mathcal{I}$ the map $(x,y) \mapsto \Lambda(x,y)$. We shall prove that $(\s,[\,\cdot\,,\,\cdot\,]_{\s})$ is a simple Lie algebra. Indeed, by \eqref{2}, we have:
\begin{equation}\label{3}
[x,[y,z]]=[x,[y,z]_{\s}]_{\s}\,\,\operatorname{mod}\, \mathcal{I},\quad \text{ for all }x,y,z \in \s.
\end{equation}
The Hom-Lie Jacobi identity and the fact that $T$ is in the centroid imply that $[x,[y,z]]+[y,[z,x]]+[z,[x,y]]$ belongs to $\Ker(T) \subset \mathcal{I}$, then by \eqref{3} we deduce:
$$
[x,[y,z]_{\s}]_{\s}+[y,[z,x]_{\s}]_{\s}+[z,[x,y]_{\s}]_{\s} \in \s \cap \mathcal{I}=\{0\},
$$
from it follows that $(\s,[\,\cdot\,,\,\cdot\,]_{\s})$ is a Lie algebra. 
\smallskip

Let $J \neq \{0\}$ be a subspace of $\s$ such that $[\s,J]_{\s} \subset J$. From \eqref{2} it follows $[\g,J \oplus \mathcal{I}]\subset J \oplus \mathcal{I}$. Since $\Im(T) \subset \mathcal{I}$, then $T(J \oplus \mathcal{I})\subset \Im(T)\subset J \oplus \mathcal{I}$. Hence $J \oplus \mathcal{I}$ is an ideal containing $\mathcal{I}$. Due to $\mathcal{I}$ is maximal and $J \neq \{0\}$, $J \oplus \mathcal{I}\!=\!\g$. As $J\!\subset \!\s$, then $J=\s$ and $(\s,[\,\cdot\,,\,\cdot\,]_{\s})$ is a simple Lie algebra. 
\smallskip

It is clear that $\g/\mathcal{I}$ with bracket $(x+\mathcal{I},y+\mathcal{I}) \mapsto [x,y]+\mathcal{I}$, is isomorphic to $(\s,[\,\cdot\,,\,\cdot\,]_{\s})$.
\end{proof}

Let $(\g,[\,\cdot\,,\,\cdot\,],T,B)$ be a quadratic Hom-Lie algebra with nilpotent twist map $T$ in the centroid. We assume that $[\,\cdot\,,\,\cdot\,]$ does not satisfy the classical Jacobi identity. By \textbf{Prop. \ref{ideal maximal}}, there exists a maximal ideal $\mathcal{I}$, for which $\Ker(T)+\Im(T) \subset \mathcal{I}$ and $\g/\mathcal{I}$ is a simple Lie algebra. Consider the bilinear map $\Lambda:\s \times \s \to \mathcal{I}$ given in \eqref{2}. Although $\s$ is not necessarily a subalgebra of $\g$ (see \textbf{Example} in \S 3), the fact that $\s$ is a simple Lie algebra leads to $T \circ \Lambda=0$.

\begin{Lemma}\label{T y Lambda}{\sl
The twist map $T$ and the bilinear map $\Lambda:\s \times \s \to \mathcal{I}$ satisfy $T \circ \Lambda=0$, that is $\Lambda(x,y)$ belongs to $\Ker(T)$ for all $x,y$ in $\s$.}
\end{Lemma}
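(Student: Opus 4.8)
The plan is to exploit the fact that $(\s,[\,\cdot\,,\,\cdot\,]_{\s})$ is a simple Lie algebra, so in particular $\s=[\s,\s]_{\s}$, together with the Hom-Lie Jacobi identity read modulo a suitable subspace. Write $\pi:\g \to \g/\mathcal{I} \cong \s$ for the projection. Since $\Im(T) \subset \mathcal{I}$, the composite $T$ factors in the sense that $T(\s\oplus\mathcal{I})=T(\g)\subset\mathcal{I}$; the content to extract is finer than this, namely that $T$ kills the $\mathcal{I}$-component $\Lambda(x,y)$ of each bracket $[x,y]$ with $x,y\in\s$.

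First I would set up the key identity. For $x,y,z \in \s$, apply the Hom-Lie Jacobi identity to $T(x),\,y,\,z$ together with the centroid property $[T(u),v]=T([u,v])$. This gives
$$
0=[T(x),[y,z]]+[T(y),[z,x]]+[T(z),[x,y]] = T\bigl([x,[y,z]]\bigr)+T\bigl([y,[z,x]]\bigr)+T\bigl([z,[x,y]]\bigr),
$$
which is just $T$ applied to the $[\,\cdot\,,\,\cdot\,]$-Jacobiator; that only reproduces the fact that the Jacobiator lies in $\Ker(T)$. So instead the useful move is different: expand $[T(z),[x,y]]$ using \eqref{2}, i.e. $[x,y]=[x,y]_{\s}+\Lambda(x,y)$, so that
$$
[T(z),[x,y]]=[T(z),[x,y]_{\s}]+[T(z),\Lambda(x,y)].
$$
Here $T(z)\in\mathcal{I}$ and $\Lambda(x,y)\in\mathcal{I}$, so the second term lies in $[\mathcal{I},\mathcal{I}]\subset\mathcal{I}$, and I would try to pin it down further. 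The cleaner route, which I expect the author follows, is to first establish that $\s$ can be chosen (or the given $\s$ automatically satisfies) $[T(\mathcal{I}),\s]\subset$ something controlled, and then use that $T(\Lambda(x,y))$, being a specific element of $T(\mathcal{I})$ attached bilinearly and $\s$-equivariantly to $\s\times\s$, must vanish because $\s$ is simple: a nonzero $\s$-equivariant bilinear map $\s\times\s\to W$ into an $\ad_{\s}$-module $W$ forces a copy of a submodule of $\s\wedge\s$ (which contains $\s$ itself for simple $\s$) inside $W$, and one shows $W=T(\Lambda(\s,\s))$ would then be a nonzero ideal-like piece of $\Ker(T)\cap\Im(T)$ that contradicts maximality of $\mathcal{I}$ or triviality of the center, exactly as in the $\dim(\g/\mathcal{I})=1$ argument in the proof of \textbf{Prop. \ref{ideal maximal}}.

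Concretely, the steps in order: (1) show $\mu(x,y):=T(\Lambda(x,y))$ defines a bilinear map $\s\times\s\to T(\mathcal{I})\subset\Ker(T)\cap\Im(T)$; (2) show $\mu$ is $\ad_{\s}$-equivariant, i.e. $\mu([x,y]_{\s},z)+\mu(x,[y,z]_{\s})\cdots$ closes up, by differentiating the bracket relations and using that $\Ker(T)$ absorbs Jacobiators and that $T$ is in the centroid; (3) conclude that $\Span_{\F}\{\mu(x,y): x,y\in\s\}=T(\Lambda(\s,\s))$ is stable under $\ad(\s)$ and, together with $\mathcal{I}$, would enlarge $\mathcal{I}$ unless it is zero — hence $\mu=0$, which is $T\circ\Lambda=0$. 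The main obstacle is step (2): making the $\s$-equivariance of $\mu$ precise requires carefully tracking the $\mathcal{I}$-components of nested brackets $[x,[y,z]]$ through \eqref{2} and \eqref{3} and invoking the Hom-Lie Jacobi identity in the form "Jacobiator $\in\Ker(T)$" at the right place, since $\s$ itself is not a subalgebra and so the naive Leibniz computation for $\Lambda$ has correction terms valued in $[\s,\mathcal{I}]$. Once equivariance is in hand, the simplicity of $\s$ and the maximality of $\mathcal{I}$ (with the center-triviality from \textbf{Prop. \ref{g es perfecta}}) finish the argument in the same style as part (ii) above.
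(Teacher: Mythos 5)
Your reduction of the problem to showing that $\mu(x,y):=T(\Lambda(x,y))$ vanishes is the right target, and you correctly observe that the bare combination ``Jacobiator $\in\Ker(T)$'' is not enough. But the mechanism you propose for forcing $\mu=0$ does not work. First, $T(\Lambda(\s,\s))\subset T(\mathcal{I})\subset \Im(T)\subset\mathcal{I}$, so adjoining its $\ad(\s)$-span to $\mathcal{I}$ cannot enlarge $\mathcal{I}$: maximality of $\mathcal{I}$ yields no contradiction, and nothing in your argument shows these elements are central, so triviality of the center is not available either. Second, equivariance of a bilinear map $\s\times\s\to W$ into an $\s$-module does not force it to vanish even for simple $\s$ --- the paper's own Example in \S 3 exhibits a nonzero cyclic equivariant $\mu:\s\times\s\to\s^{\ast}$, and $\Lambda$ itself is generally nonzero. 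Cohomologically, the identity you would obtain in your step (2) says that $T\circ\Lambda$ is a $2$-cocycle of $\s$ with values in $T(\mathcal{I})$; Whitehead's lemma then only makes it a coboundary, i.e.\ shows that \emph{some} complement of $T(\mathcal{I})$ is a subalgebra, not that the cocycle attached to the given $\s$ vanishes. Also, the inclusion $T(\mathcal{I})\subset\Ker(T)$ asserted in your step (1) is unjustified: $T$ is nilpotent, not square-zero.

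The step you are missing is the one that places $T(\Lambda(x,y))$ inside $\s$. The paper builds the auxiliary space $\p=\s\oplus T(\mathcal{I})$ with the bracket \eqref{corchete de p} and shows that $\Psi:\g\to\p$, $x+U\mapsto x+T(U)$, is a surjective algebra morphism with kernel exactly $\Ker(T)$; since the Jacobiator of $\g$ lies in $\Ker(T)=\Ker(\Psi)$, the quotient structure $\p$ is a Lie algebra, and $T(\Lambda(x,y))$ is precisely the $T(\mathcal{I})$-component of $[x,y]_{\p}$. Invoking \textbf{Prop.~A.1} of \cite{Figueroa} for the surjection $\Upsilon:\p\to\s$, the paper concludes that $\s$ is a subalgebra of $\p$, whence $T(\Lambda(\s,\s))\subset\s\cap\Im(T)\subset\s\cap\mathcal{I}=\{0\}$. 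To complete your route you would have to supply an argument of exactly this kind; simplicity of $\s$ together with maximality of $\mathcal{I}$ alone will not do it.
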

\begin{proof}
Let $\p=\s \oplus T(\mathcal{I})$. We define a bracket $[\,\cdot\,,\,\cdot\,]_{\p}$ in $\p$ by:
\begin{equation}\label{corchete de p}
\begin{split}
x,y \in \s,\,\quad & [x,y]_{\p}=[x,y]_{\s}+T(\Lambda(x,y)),\\
x \in \s,\,\,\,U \in \mathcal{I}, \quad & [x,T(U)]_{\p}=T([x,U]),\\
U,V \in \mathcal{I}, \quad & [T(U),T(V)]_{\p}=T([U,V]).
\end{split}
\end{equation}
We claim that $(\p,[\,\cdot\,,\,\cdot\,]_{\p})$ is a Lie algebra and the linear map $\Psi:\g \to \p$, $x+U \mapsto x+T(U)$, is a surjective algebra morphism with kernel $\Ker(T)$. Let $x,y$ be in $\s$ and $U,V$ be in $\mathcal{I}$; by \eqref{2} and \eqref{corchete de p} we have:
\begin{equation}\label{m2}
\begin{split}
& \Psi([x+U,y+V])\\
\,&=[x,y]_{\s}+T(\Lambda(x,y))+T([x,V])+T([U,y])+T([U,V])\\
\,&=[x,y]_{\p}+[x,T(V)]_{\p}+[T(U),y]_{\p}+[T(U),T(V)]_{\p}\\
\,&=[x+T(U),y+T(V)]_{\p}=[\Psi(x+U),\Psi(y+V)]_{\p}.
\end{split}
\end{equation}
Then $\Psi$ is a morphism of algebras. It is clear that $\Psi$ is surjective. 
\smallskip

Let $x+U$ be in $\Ker(\Psi)$, then $x+T(U)=0$. Since $\Im(T) \subset \mathcal{I}$ and $\s \cap \mathcal{I}=\{0\}$, then $x=0$ and $T(U)=0$, hence $x+U=U$ belongs to $\Ker(T)$, thus $\Ker(\Psi) \subset \Ker(T)$. 
\smallskip

If $x+U$ is in $\Ker(T)$, then $x$ is in $\Ker(T)+\mathcal{I}=\mathcal{I}$, so $x=0$, as $\s \cap \mathcal{I}=\{0\}$ and $\Ker(T) \subset \mathcal{I}$. Thus $U$ belongs to $\Ker(T)$ and $\Psi(U)=T(U)=0$. Then $\Ker(T)\subset \Ker(\Psi)$ and $\Ker(T)=\Ker(\Psi)$.
\smallskip

As $T$ is in the centroid, then $[X,[Y,Z]]+[Y,[Z,X]]+[Z,[X,Y]]$ belongs to $\Ker(T)=\Ker(\Psi)$. Due to $\Psi$ is a morphism of algebras, we get:
\begin{equation*}\label{Dom26-1}
[\Psi(X),[\Psi(Y),\Psi(Z)]_{\p}]_{\p}=\Psi([X,[Y,Z]])\,\text{ for all }X,Y,Z \in \g.
\end{equation*}
From it follows that $(\p,[\,\cdot\,,\,\cdot\,]_{\p})$ is a Lie algebra, as $\Psi$ is surjective. We claim that $T \circ \Lambda=0$. Let $x$ be in $\s$ and $U$ be in $\mathcal{I}$. The map $\Upsilon:\p \to \s$, defined by $\Upsilon(x+T(U))=x$, is a surjective Lie algebra morphism. Then $\s$ is a subalgebra of $\p$ (see \cite{Figueroa}, \textbf{Prop. A.1}). By \eqref{corchete de p}, it follows that $[\s,\s]_{\p}=[\s,\s]_{\s}+T(\Lambda(\s,\s))\subset \s$, then $T(\Lambda(\s,\s))$ lies in $\s \cap \Im(T)\subset \s \cap \mathcal{I}=\{0\}$, hence $T \circ \Lambda=0$. 

\end{proof}

\section{Indecomposable quadratic Hom-Lie algebras with nilpotent twist map in the centroid}

A quadratic Hom-Lie algebra $\g,$ is decomposable if there is a proper ideal $I \neq \{0\}$ such that $\g=I \oplus I^{\perp}$, otherwise it is indecomposable. 
\smallskip

From now on we assume that $(\g,[\,\cdot\,,\,\cdot\,],T,B)$ is an indecomposable quadratic Hom-Lie algebra, with nilpotent twist map $T$ in the centroid, and $[\,\cdot\,,\,\cdot\,]$ does not satisfy the Jacobi identity for Lie algebras. 
\smallskip

By \textbf{Prop. \ref{ideal maximal}}, there is a maximal ideal $\mathcal{I}$ such that $\g/\mathcal{I}$ is a simple Lie algebra and $\Ker(T)+\Im(T)\!\subset \!\mathcal{I}$. Observe that $\mathcal{I}\cap \mathcal{I}^{\perp}\! \neq \!0$, as $\g$ is indecomposable; thus $\mathcal{I}^{\perp} \subset \mathcal{I}$, because $\mathcal{I}^{\perp}$ is minimal. 
\smallskip

Let $\h$ be a subspace such that $\mathcal{I}=\h \oplus \mathcal{I}^{\perp}$. Then $\h$ is non-degenerate subspace, $\g\!=\!\h \oplus \h^{\perp}$ and $\mathcal{I}^{\perp} \!\subset \!\h^{\perp}$. Let $\s$ be a subspace such that $\h^{\perp}\!=\!\s \oplus \mathcal{I}^{\perp}$, then $\g\!=\!\s \oplus \h \oplus \mathcal{I}^{\perp}$. Since $B$ is non-degenerate, by the Witt decomposition we may assume that $\s$ is isotropic. We know that $\s$ is the underlying vector space of a simple Lie algebra $(\s,[\,\cdot\,,\,\cdot\,]_{\s})$ for which the Lie algebra $\p\!=\!\s \oplus T(\mathcal{I})$ is the semidirect sum of $\s$ and $T(\mathcal{I})$. This leads us to $T \circ \Lambda=0$, where $[x,y]=[x,y]_{\s}+\Lambda(x,y)$, $[x,y]_{\s}$ belongs to $\s$ and $\Lambda(x,y)$ belongs to $\mathcal{I}$, for all $x,y$ in $\s$ (see \textbf{Lemma \ref{T y Lambda}}).
\smallskip

Due to $\Lambda(x,y)$ belongs to $\mathcal{I}=\h \oplus \mathcal{I}^{\perp}$, then there are $\lambda(x,y)$ in $\h$ and $\mu(x,y)$ in $\mathcal{I}^{\perp}$ such that: $\Lambda(x,y)=\lambda(x,y)+\mu(x,y)$. Let $\lambda:\s \times \s \to \h$ be the bilinear map $(x,y) \mapsto \lambda(x,y)$, and $\mu:\s \times \s \to \mathcal{I}^{\perp}$ the bilinear map $(x,y) \mapsto \mu(x,y)$, for all $x,y$ in $\s$.
\smallskip

Let $u$ be in $\h$; since $[x,u]$ belongs to $\mathcal{I}=\h \oplus \mathcal{I}^{\perp}$, there are $\bar{\rho}(x,u)$ in $\h$ and $\bar{\tau}(x,u)$ in $\mathcal{I}^{\perp}$, such that: $[x,u]=\bar{\rho}(x,u)+\bar{\tau}(x,u)$. Let $\rho:\s \to \gl(\h)$ be the map defined by $\rho(x)(u)=\bar{\rho}(x,u)$ and $\tau:\s \to \Hom(\h;\mathcal{I}^{\perp})$ the map defined by $\tau(x)(u)=\bar{\tau}(x,u)$.
\smallskip

Let $\alpha$ be in $\mathcal{I}^{\perp}$. Due to $\mathcal{I}^{\perp}$ is an ideal of $\g$, there is a map $\sigma:\s \to \gl(\mathcal{I}^{\perp})$, such that $[x,\alpha]=\sigma(x)(\alpha)$.
\smallskip

Let $u,v$ be in $\h$. Due to $[u,v]$ belongs to $\mathcal{I}=\h \oplus \mathcal{I}^{\perp}$, there are $[u,v]_{\h}$ in $\h$ and $\gamma(u,v)$ in $\mathcal{I}^{\perp}$ such that: $[u,v]=[u,v]_{\h}+\gamma(u,v)$. Let $[\,\cdot\,,\,\cdot\,]_{\h}:\h \times \h \to \h$ be the bilinear map $(u,v) \mapsto [u,v]_{\h}$, and $\gamma:\h \times \h \to \mathcal{I}^{\perp}$ the bilinear map $(u,v) \mapsto \gamma(u,v)$.
\smallskip

Therefore, the maps $\lambda,\mu,\rho,\tau,\sigma,[\,\cdot\,,\,\cdot\,]_{\h}$ and $\gamma$ describe the bracket $[\,\cdot\,,\,\cdot\,]$ of $\g=\s \oplus \h \oplus \mathcal{I}^{\perp}$, as follows:
\begin{equation}\label{descomposicion de corchete 2}
\begin{split}
x,y \in \s,\,\quad & [x,y]=[x,y]_{\s}+\lambda(x,y)+\mu(x,y),\\
x \in \s,\,u \in \h,\quad & [x,u]=\rho(x)(u)+\tau(x)(u),\\
x \in \s,\,\alpha \in \mathcal{I}^{\perp},\quad & [x,\alpha]=\sigma(x)(\alpha),\\
u,v \in \h,\quad & [u,v]=[u,v]_{\h}+\gamma(u,v).
\end{split}
\end{equation} 
Since $\mathcal{I}^{\perp} \subset \mathcal{I}$ then $[\mathcal{I},\mathcal{I}^{\perp}]=\{0\}$, as $B$ is invariant and non-degenerate. In particular, $[\h,\mathcal{I}^{\perp}]=\{0\}$, and $\mathcal{I}^{\perp}$ is abelian.

\begin{Lemma}\label{el map xi}{\sl
There is a bijective linear map $\xi:\mathcal{I}^{\perp} \to \s^{\ast}$ such that $\xi \circ \sigma(x)=\ad_{\s}(x)^{\ast}\circ \xi$, for all $x$ in $\s$.}
\end{Lemma}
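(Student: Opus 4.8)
The plan is to exhibit the map $\xi$ directly from the non-degenerate invariant form $B$ and then verify the intertwining relation by a short computation using invariance of $B$ and the fact that $\s$ is isotropic. First I would set, for each $\alpha$ in $\mathcal{I}^{\perp}$, the linear functional $\xi(\alpha):\s \to \F$ by $\xi(\alpha)(x)=B(\alpha,x)$. Since $\g=\s \oplus \h \oplus \mathcal{I}^{\perp}$ with $\s$ isotropic, $\h$ non-degenerate, and $\mathcal{I}^{\perp}\subset \h^{\perp}$, the form $B$ restricted to $\s \times \mathcal{I}^{\perp}$ must be a perfect pairing: indeed $B(\alpha,\mathcal{I})=B(\alpha,\h\oplus\mathcal{I}^{\perp})=\{0\}$ because $\mathcal{I}^{\perp}=\mathcal{I}^{\perp}$ and $B(\mathcal{I},\mathcal{I}^{\perp})$ vanishes as noted just before the lemma, while $B(\alpha,\h)=0$ since $\mathcal{I}^{\perp}\subset\h^{\perp}$; hence if $B(\alpha,\s)=\{0\}$ then $\alpha\in\g^{\perp}=\{0\}$, so $\xi$ is injective, and a dimension count $\dim\mathcal{I}^{\perp}=\dim\s^{\ast}$ (both equal $\dim(\g/\mathcal{I})=\dim\s$ by the minimality/maximality duality $\mathcal{I}^{\perp}=(\mathcal{I})^{\perp}$) shows $\xi$ is bijective.

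Next I would check the intertwining relation. Fix $x$ in $\s$, $\alpha$ in $\mathcal{I}^{\perp}$, and $y$ in $\s$, and compute both sides evaluated at $y$. On one side, $\big(\xi\circ\sigma(x)\big)(\alpha)(y)=\xi(\sigma(x)(\alpha))(y)=B(\sigma(x)(\alpha),y)=B([x,\alpha],y)$, using the defining equation $[x,\alpha]=\sigma(x)(\alpha)$ from \eqref{descomposicion de corchete 2}. On the other side, $\big(\ad_{\s}(x)^{\ast}\circ\xi\big)(\alpha)(y)=\xi(\alpha)\big(\ad_{\s}(x)(y)\big)=\xi(\alpha)([x,y]_{\s})=B(\alpha,[x,y]_{\s})$. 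So the claim reduces to the identity $B([x,\alpha],y)=B(\alpha,[x,y]_{\s})$ for all $x,y\in\s$, $\alpha\in\mathcal{I}^{\perp}$. By invariance, $B([x,\alpha],y)=-B([x,y],\alpha)=-B([x,y]_{\s}+\lambda(x,y)+\mu(x,y),\alpha)$; now $B(\lambda(x,y),\alpha)=0$ because $\lambda(x,y)\in\h$ and $\mathcal{I}^{\perp}\subset\h^{\perp}$, and $B(\mu(x,y),\alpha)=0$ because both lie in $\mathcal{I}^{\perp}\subset\mathcal{I}$ and $B(\mathcal{I},\mathcal{I}^{\perp})=\{0\}$, leaving $B([x,\alpha],y)=-B([x,y]_{\s},\alpha)=B(\alpha,[x,y]_{\s})$ by skew-symmetry of $\ad$ inside $\s$ together with symmetry of $B$; more directly, $-B([x,y]_{\s},\alpha)=B([y,x]_{\s},\alpha)$ and one rewrites to match. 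This is the content of the relation, so the lemma follows.

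I would also note, as a sanity check on the sign conventions, that the functional $\ad_{\s}(x)^{\ast}$ acts by precomposition, $(\ad_{\s}(x)^{\ast}f)(y)=f([x,y]_{\s})$, which is exactly what appears above; no extra sign is introduced. The only place where care is genuinely needed is the bookkeeping of which summand of $\g=\s\oplus\h\oplus\mathcal{I}^{\perp}$ each term of $[x,y]$ and $[x,\alpha]$ lands in, and the repeated use of the three orthogonality facts $\s\perp\s$, $\h\perp\mathcal{I}^{\perp}$, and $\mathcal{I}\perp\mathcal{I}^{\perp}$; the main (and really only) obstacle is making sure the pairing $B|_{\s\times\mathcal{I}^{\perp}}$ is perfect so that $\xi$ is a bijection, and this is precisely guaranteed by the Witt-type decomposition set up immediately before the lemma together with $\mathcal{I}^{\perp}=(\mathcal{I})^{\perp}$. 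Everything else is a one-line computation with invariance.
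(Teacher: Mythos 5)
Your construction of $\xi$ and the overall strategy are exactly the paper's: define $\xi(\alpha)(x)=B(\alpha,x)$, get bijectivity from non-degeneracy of $B$ (your injectivity-plus-dimension-count is just a more explicit version of the paper's one-line appeal to non-degeneracy), and verify the intertwining relation by invariance of $B$ together with the orthogonality relations that kill the $\lambda$ and $\mu$ components of $[x,y]$. All of that is fine.

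There is, however, a sign error at the end that your write-up does not actually repair. Your own computation correctly yields $B([x,\alpha],y)=-B(\alpha,[x,y]_{\s})$, i.e.\ $\xi(\sigma(x)(\alpha))(y)=-\xi(\alpha)(\ad_{\s}(x)(y))$; this is precisely the chain the paper writes out. The lemma is then true because the coadjoint action carries its usual minus sign, $(\ad_{\s}^{\ast}(x)f)(y)=-f(\ad_{\s}(x)(y))$ (this is also what makes $\sigma$ a representation isomorphic to the coadjoint representation, as the paper notes afterwards). You instead declare the sign-free convention $(\ad_{\s}(x)^{\ast}f)(y)=f([x,y]_{\s})$ and assert the identity $B([x,\alpha],y)=B(\alpha,[x,y]_{\s})$, which contradicts the line you derived two steps earlier; the remark ``$-B([x,y]_{\s},\alpha)=B([y,x]_{\s},\alpha)$ and one rewrites to match'' only swaps $x$ and $y$ inside the bracket and does not produce $+B(\alpha,[x,y]_{\s})$. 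Under your stated convention you would in fact prove $\xi\circ\sigma(x)=-\ad_{\s}(x)^{\ast}\circ\xi$. The fix is one line --- adopt the standard coadjoint convention with the minus sign --- but as written the ``sanity check on sign conventions'' is exactly the step that is wrong.
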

\begin{proof}
We define $\xi$ by $\xi(\alpha)(x)=B(\alpha,x)$, for all $\alpha$ in $\mathcal{I}^{\perp}$ and $x$ in $\s$. The map $\xi$ is bijective, as $B$ is non-degenerate. Let $x,y$ be in $\s$ and $\alpha$ be in $\mathcal{I}^{\perp}$. Using that $B$ is invariant and $\mathcal{I}^{\perp} \subset \mathcal{I}$, we get:
$$
\aligned
& \xi(\sigma(x)(\alpha))(y)=B([x,\alpha],y)=-B(\alpha,[x,y])\\
&=-B(\alpha,[x,y]_{\s})=-\xi(\alpha)(\ad_{\s}(x)(y))=\ad_{\s}^{\ast}(x)\circ \xi(\alpha)(y).
\endaligned
$$
Hence, $\xi \circ \sigma(x)=\ad_{\s}^{\ast}(x)\circ \xi$. Then 
$\sigma:\s \to \gl(\mathcal{I}^{\perp})$ is a representation isomorphic to the co-adjoint representation $\ad_{\s}^{\ast}:\s \to \gl(\s^{\ast})$. 
\end{proof}

Let $B_{\h}=B\vert_{\h \times \h}$, then $B_{\h}$ is a non-degenerate bilinear form on $\h$, as $\h$ is a non-degenerate subspace of $\g$. Thus, the invariant metric $B$ has the following description according to the decomposition $\g=\s \oplus \h \oplus \mathcal{I}^{\perp}$:
\begin{equation}\label{descomposicion de metrica}
B(x+u+\alpha,y+v+\beta)=\xi(\alpha)(y)+\xi(\beta)(x)+B_{\h}(u,v),
\end{equation}
for all $x,y$ in $\s$, $u,v$ in $\h$ and $\alpha,\beta$ in $\mathcal{I}^{\perp}$. 
\smallskip

Let $\mu_{\xi}=\xi \circ \mu:\s \times \s \to \s^{\ast}$. Using that $B$ is invariant we get:
\begin{equation}\label{mu es ciclica}
\begin{split}
& B(x,[y,z])=B(x,\mu(y,z))=\xi(\mu(y,z))(x)=\mu_{\xi}(y,z)(x)\\
&=B([x,y],z)=\mu_{\xi}(x,y)(z),\quad \text{ for all }x,y,z in \s.
\end{split}
\end{equation}
Then $\mu_{\xi}(x,y)(z)=\mu_{\xi}(y,z)(x)$. For each $x$ in $\s$, define the map $\tau_{\xi}(x)=\xi \circ \tau(x):\h \to \s^{\ast}$. As $B$ is invariant under $[\,\cdot\,,\,\cdot\,]$, we get:
$$
\aligned
& B([x,u],y)=B(\tau(x)(u),y)=\xi(\tau(x)(u))(y)=\tau_{\xi}(x)(u)(y)\\
&=-B([x,y],u)=-B_{\h}(\lambda(x,y),u).
\endaligned
$$
Then, 
\begin{equation}\label{Dom26Mayo}
\tau_{\xi}(x)(u)(y)=-B_{\h}(\lambda(x,y),u),\,\text{ for all }x,y \in \s\,\text{ and }u \in \h.
\end{equation}

Define the map $\gamma_{\xi}=\xi \circ \gamma:\s \times \s \to \s^{\ast}$. Since $B$ is invariant we get:
$$
\aligned
& B([x,u],v)=B_{\h}(\rho(x)(u),v)=B(x,[u,v])\\
&=B(x,\gamma(u,v))=\xi(\gamma(u,v))(x)=\gamma_{\xi}(u,v)(x).
\endaligned
$$
Hence,
\begin{equation}\label{gamma y rho 1}
B_{\h}(\rho(x)(u),v)=\gamma_{\xi}(u,v)(x),\,\,\text{ for all }u,v \in \h,\text{ and }x \in \s.
\end{equation}

Now we shall give a description of the twist map $T$ according to the decomposition $\g=\s \oplus \h \oplus \mathcal{I}^{\perp}$. Let $x$ be in $\s$, since $\Im(T) \subset \mathcal{I}$ (see \textbf{Lemma \ref{ideal maximal}}), then $T(x)$ lies in $\mathcal{I}=\h \oplus \mathcal{I}^{\perp}$. Hence there are $\phi(x)$ in $\h$ and $\varphi(x)$ in $\mathcal{I}^{\perp}$ such that $T(x)=\phi(x)+\varphi(x)$. Let $\phi:\s \to \h$ be the map $x \mapsto \phi(x)$, and $\varphi:\s \to \mathcal{I}^{\perp}$ the map $x \mapsto \varphi(x)$. 
\smallskip

Let $u$ be in $\h$. Since $T(u)$ lies in $I=\h \oplus \mathcal{I}^{\perp}$, there are $\Theta(u)$ in $\h$ and $L(u)$ in $\mathcal{I}^{\perp}$ such that $T(u)=\Theta
(u)+L(u)$. Let $\Theta:\h \to \h$ be the map $u \mapsto \Theta(u)$, and $L:\h \to \mathcal{I}^{\perp}$ the map $u \mapsto L(u)$. 
\smallskip

Due to $\mathcal{I}^{\perp} \subset \Ker(T)$, the twist map $T$ has the following description:
\begin{equation}\label{descomposicion de T}
\begin{split}
x \in \s, \quad & T(x)=\phi(x)+\varphi(x),\\
u \in \h, \quad & T(u)=\Theta(u)+L(u).
\end{split}
\end{equation}
Define the map $\varphi_{\xi}\!=\!\xi \circ \!\varphi:\s \!\to \!\s^{\ast}$. As $T$ is self-adjoint, by \eqref{descomposicion de metrica} we get:
\begin{equation}\label{sf2}
\varphi_{\xi}(x)(y)=B(\varphi(x),y)=B(x,\varphi(y))=\varphi_{\xi}(y)(x),\tag{\textbf{A}}
\end{equation}
for all $x,y$ in $\s$. Similarly, define $L_{\xi}=\xi \circ L:\h \to \s^{\ast}$, then
\begin{equation}\label{sf0}
L_{\xi}(u)(x)\!=\!B(L(u),x)\!=\!B_{\h}(u,\phi(x)),\,\text{ for all }x \in \s\,\text{ and }u \in \h.
\end{equation}

\begin{Lemma}\label{h is cuadratica}{\sl
The tuple $(\h,[\,\cdot\,,\,\cdot\,]_{\h},\Theta,B_{\h})$ is a quadratic Hom-Lie algebra with nilpotent twist map $\Theta$ in the centroid.}
\end{Lemma}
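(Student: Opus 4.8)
The plan is to verify the four structural properties of $(\h,[\,\cdot\,,\,\cdot\,]_{\h},\Theta,B_{\h})$ one at a time: that $\Theta$ lies in the centroid, that $\Theta$ is nilpotent, that $B_{\h}$ is non-degenerate, invariant and $\Theta$ is self-adjoint with respect to it, and finally that the Hom-Lie Jacobi identity holds. The key tool throughout will be to project the corresponding identities for $(\g,[\,\cdot\,,\,\cdot\,],T,B)$ onto $\h$ along the decomposition $\g=\s\oplus\h\oplus\mathcal{I}^{\perp}$, using \eqref{descomposicion de corchete 2}, \eqref{descomposicion de T}, and the already-established facts $[\h,\mathcal{I}^{\perp}]=\{0\}$, $\mathcal{I}^{\perp}\subset\Ker(T)$, $\Im(T)\subset\mathcal{I}=\h\oplus\mathcal{I}^{\perp}$, and $\mathcal{I}^{\perp}$ abelian.

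First I would check that $\Theta$ is in the centroid of $(\h,[\,\cdot\,,\,\cdot\,]_{\h})$: for $u,v\in\h$ compute $T([u,v])=T([u,v]_{\h}+\gamma(u,v))=T([u,v]_{\h})$ since $\gamma(u,v)\in\mathcal{I}^{\perp}\subset\Ker(T)$; write $T([u,v]_{\h})=\Theta([u,v]_{\h})+L([u,v]_{\h})$. On the other hand $[T(u),v]=[\Theta(u)+L(u),v]=[\Theta(u),v]=[\Theta(u),v]_{\h}+\gamma(\Theta(u),v)$ because $L(u)\in\mathcal{I}^{\perp}$ and $[\mathcal{I}^{\perp},\h]=\{0\}$. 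Since $T$ is in the centroid of $\g$, these two expressions are equal; comparing the $\h$-components gives $\Theta([u,v]_{\h})=[\Theta(u),v]_{\h}$, which is exactly the centroid condition for $\Theta$. Nilpotency of $\Theta$ is then immediate: $\Theta$ is the $\h$-component of $T\vert_{\h}$ composed with projection, and since $\mathcal{I}^{\perp}\subset\Ker(T)$ and $\Im(T)\subset\h\oplus\mathcal{I}^{\perp}$, one sees $\Theta^{k}$ is the $\h\to\h$ block of $T^{k}$ up to $\mathcal{I}^{\perp}$-terms, so $T^{N}=0$ forces $\Theta^{N}=0$; alternatively, $\h$ carries an induced Hom-Lie algebra structure as a subquotient and any twist map in the centroid that is not invertible on a finite-dimensional space with the nilpotence inherited from $T$ must itself be nilpotent.

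For the metric properties: $B_{\h}$ is non-degenerate by construction (this was noted just before the Lemma, since $\h$ is a non-degenerate subspace). Invariance $B_{\h}([u,v]_{\h},w)=B_{\h}(u,[v,w]_{\h})$ follows by taking $B$-invariance of $\g$ applied to $u,v,w\in\h$ and using \eqref{descomposicion de metrica}: the $\mathcal{I}^{\perp}$-components $\gamma(u,v)$ pair trivially with $\h$ under $B$ (from \eqref{descomposicion de metrica}, $B(\h,\mathcal{I}^{\perp})=0$), so only the $\h$-components survive. Self-adjointness $B_{\h}(\Theta(u),v)=B_{\h}(u,\Theta(v))$ comes from $B(T(u),v)=B(u,T(v))$ with $u,v\in\h$, again discarding the $L(u),L(v)\in\mathcal{I}^{\perp}$ pieces via $B(\mathcal{I}^{\perp},\h)=0$.

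The main obstacle is the Hom-Lie Jacobi identity for $(\h,[\,\cdot\,,\,\cdot\,]_{\h},\Theta)$, since the bracket $[\,\cdot\,,\,\cdot\,]_{\h}$ is not the restriction of $[\,\cdot\,,\,\cdot\,]$ but only its $\h$-component, so double brackets pick up cross terms. The plan here is to write, for $u,v,w\in\h$, the full Hom-Lie Jacobi identity for $\g$ with arguments $u,v,w$: $[T(u),[v,w]]+[T(v),[w,u]]+[T(w),[u,v]]=0$. Expand each term using \eqref{descomposicion de corchete 2} and \eqref{descomposicion de T}; the $\mathcal{I}^{\perp}$-valued pieces $\gamma$, together with $[\mathcal{I}^{\perp},\h]=0$ and $\mathcal{I}^{\perp}$ abelian, kill many contributions, and $L(u)\in\mathcal{I}^{\perp}$ means $[T(u),-]=[\Theta(u),-]$. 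What remains after projecting onto $\h$ should be precisely $[\Theta(u),[v,w]_{\h}]_{\h}+[\Theta(v),[w,u]_{\h}]_{\h}+[\Theta(w),[u,v]_{\h}]_{\h}=0$, i.e.\ the Hom-Lie Jacobi identity for $\h$. The delicate point is confirming that all the terms involving $\gamma$ really do vanish or land in $\mathcal{I}^{\perp}$ and hence drop out of the $\h$-projection; this uses that $[\g,\mathcal{I}^{\perp}]\subset\mathcal{I}^{\perp}$ (as $\mathcal{I}^{\perp}$ is an ideal) so $[\h,\gamma(v,w)]=0$ and similarly the $\mathcal{I}^{\perp}$-components never feed back into $\h$. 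Once this bookkeeping is done, all four conditions are verified and the Lemma follows.
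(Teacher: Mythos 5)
Your proposal is correct and follows essentially the same route as the paper: project the identities of $(\g,[\,\cdot\,,\,\cdot\,],T,B)$ onto the $\h$-component of $\g=\s\oplus\h\oplus\mathcal{I}^{\perp}$, using $\mathcal{I}^{\perp}\subset\Ker(T)$, $[\h,\mathcal{I}^{\perp}]=\{0\}$ and $\h\cap\mathcal{I}^{\perp}=\{0\}$ to discard the $\mathcal{I}^{\perp}$-valued terms. The only cosmetic difference is in the Hom-Lie Jacobi step, where the paper first rewrites $[T(u),[v,w]]$ as $T([u,[v,w]])$ via the centroid property and then applies the already-established centroid identity for $\Theta$, whereas you expand the bracket directly; both yield $[\Theta(u),[v,w]_{\h}]_{\h}$ modulo $\mathcal{I}^{\perp}$ and the conclusion follows identically.
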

\begin{proof}
By \eqref{descomposicion de T}, $\Theta:\h \to \h$ is a nilpotent map, because $T$ is nilpotent and $\mathcal{I}^{\perp} \subset \Ker(T)$. Let $u,v,w$ be in $\h$. Comparing the components in $\h$ and $\mathcal{I}^{\perp}$ of $T([u,v])$ and $[T(u),v]$, by \eqref{descomposicion de corchete 2} and \eqref{descomposicion de T}, we deduce that $T([u,v])\!=\![T(u),v]$ is equivalent to:
\begin{align}
\label{T1} \,& \Theta([u,v]_{\h})=[\Theta(u),v]_{\h},\,\,\,\text{ and }\\
\label{T2} \,& L([u,v]_{\h})=\gamma(\Theta(u),v).
\end{align}
The expression \eqref{T1} says that $\Theta$ is in the centroid of $(\h,[\,\cdot\,,\,\cdot\,]_{\h})$. 
\smallskip

By \eqref{descomposicion de corchete 2}, it follows that $[u,[v,w]]=[u,[v,w]_{\h}]_{\h} \operatorname{mod} \mathcal{I}^{\perp}$. We apply $T$ and we use the facts that $\Theta$ is in the centroid of $[\,\cdot\,,\,\cdot\,]_{\h}$, $\mathcal{I}^{\perp} \subset \Ker(T)$ and that $[\h,\mathcal{I}^{\perp}]=\{0\}$, to get:
$$
[T(u),[v,w]]=T([u,[v,w]])=[\Theta(u),[v,w]_{\h}]_{\h}\,\operatorname{mod} \mathcal{I}^{\perp}.
$$
Due to $[T(u),[v,w]]+[T(v),[w,u]]+[T(w),[u,v]]=0$, then:
$$
[\Theta(u),[v,w]_{\h}]_{\h}+[\Theta(v),[w,u]_{\h}]_{\h}+[\Theta(w),[u,v]_{\h}]_{\h} \in \h \cap \mathcal{I}^{\perp}=\{0\}.
$$
Hence, $(\h,[\,\cdot\,,\,\cdot\,]_{\h},\Theta)$ is a Hom-Lie algebra. In addition,
$$
B_{\h}(\Theta(u),v)=B(T(u),v)=B(u,T(v))=B_{\h}(u,\Theta(v)),
$$
then $\Theta$ is $B_{\h}$-self adjoint. Since $B$ is invariant under $[\,\cdot\,,\,\cdot\,]$ and $B(\h,\mathcal{I}^{\perp})=\{0\}$, we obtain: $B_{\h}(u,[v,w]_{\h})\!=\!B(u,[v,w])\!=\!B([u,v],w)\!=\!B_{\h}([u,v]_{\h},w)$. Then $B_{\h}$ is invariant under $[\,\cdot\,,\,\cdot\,]_{\h}$ and $(\h,[\,\cdot\,,\,\cdot\,]_{\h},\Theta,B_{\h})$ is a quadratic Hom-Lie algebra with nilpotent twist map $\Theta$ in the centroid. 
\end{proof}

Let $x$ be in $\s$ and $u$ be in $\h$. Comparing the components in $\h$ and $\mathcal{I}^{\perp}$, from \eqref{descomposicion de corchete 2} and \eqref{descomposicion de T}, we deduce that $T([x,u])=[T(x),u]=[x,T(u)]$ is equivalent to:
\begin{align}
\label{T3} & \Theta \circ \rho(x)=\ad_{\h}(\phi(x))=\rho(x) \circ \Theta,\,\,\,\text{ and }\tag{\textbf{B}}\\
\label{T4} & L(\rho(x)(u))=\gamma(\phi(x),u)=\tau(x)(\Theta(u))+\sigma(x)(L(u)).
\end{align}

Using that $T \circ \Lambda=0$ (see \textbf{Prop. \ref{T y Lambda}}), we can simplify \eqref{T4}.

\begin{claim}\label{tau y T_h}{\sl
$\tau(x)(\Theta(u))=0$ for all $x$ in $\s$ and $u$ in $\h$.}
\end{claim}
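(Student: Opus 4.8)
The plan is to derive the claim $\tau(x)(\Theta(u))=0$ from the identity $T\circ\Lambda=0$ established in \textbf{Lemma \ref{T y Lambda}}, combined with the structural decomposition \eqref{descomposicion de corchete 2} and \eqref{descomposicion de T}. The key observation is that $\tau(x)(\Theta(u))$ is the $\mathcal{I}^{\perp}$-component of $[x,T(u)]$ restricted to the $\h$-part of $T(u)$, so I want to recognize it as a component of $T$ applied to something that lands in the image of $\Lambda$.

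First I would compute $\tau(x)(\Theta(u))$ by looking at \eqref{T4}: we already know $L(\rho(x)(u)) = \gamma(\phi(x),u) = \tau(x)(\Theta(u)) + \sigma(x)(L(u))$. So it suffices to show $L(\rho(x)(u)) = \sigma(x)(L(u))$, equivalently that $\sigma(x)(L(u)) - L(\rho(x)(u)) = 0$. I would translate this into a statement about the invariant metric using \textbf{Lemma \ref{el map xi}}: applying $\xi$ and using $\xi\circ\sigma(x) = \ad_{\s}^{\ast}(x)\circ\xi$ turns $\sigma(x)(L(u))$ into $\ad_{\s}^{\ast}(x)(L_{\xi}(u))$, while \eqref{sf0} gives $L_{\xi}(u)(y) = B_{\h}(u,\phi(y))$. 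Pairing against an arbitrary $y\in\s$ and using that $B$ is invariant and self-adjoint for $T$, the quantity $\xi\big(\sigma(x)(L(u)) - L(\rho(x)(u))\big)(y)$ should reduce, after moving $T$ across $B$ and using $\Theta\circ\rho(x) = \rho(x)\circ\Theta$ from \eqref{T3}, to something proportional to $B_{\h}(u, \cdot)$ evaluated on the $\h$-component of $T(\Lambda(x,y))$ or $T([x,y])$ — which vanishes because $T\circ\Lambda = 0$ and $T([x,y]_{\s})$ contributes only through $\phi$.

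An alternative, perhaps cleaner route: apply $T$ directly to the identity $[x,u] = \rho(x)(u) + \tau(x)(u)$ and also compute $[T(x),u]$ and $[x,T(u)]$ separately, then use $T([x,u]) = [x,T(u)]$ (valid since $T$ is in the centroid: $T([x,u]) = [T(x),u]$, but we want the form $[x,T(u)]$ via the Hom-Jacobi manipulation already recorded above \eqref{T3}). The term $[x,T(u)] = [x,\Theta(u)] + [x,L(u)] = \rho(x)(\Theta(u)) + \tau(x)(\Theta(u)) + \sigma(x)(L(u))$ by \eqref{descomposicion de corchete 2}, and matching $\mathcal{I}^{\perp}$-components against $T([x,u]) = T(\rho(x)(u)) = L(\rho(x)(u))$ (since $\rho(x)(u)\in\h$ and $\tau(x)(u)\in\mathcal{I}^{\perp}\subset\Ker T$) recovers \eqref{T4}. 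So the real content is isolating $\tau(x)(\Theta(u))$ from $\sigma(x)(L(u))$ inside the single relation \eqref{T4}, which forces me back to the metric/duality argument of the previous paragraph — there is no way around using non-degeneracy of $B$ to separate the two $\mathcal{I}^{\perp}$-valued terms.

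The main obstacle I anticipate is precisely this separation step: \eqref{T4} bundles $\tau(x)(\Theta(u))$ and $\sigma(x)(L(u))$ together, and neither is individually pinned down by comparing components. The resolution should come from pairing with $B$ and exploiting that $\s$ is isotropic, $\mathcal{I}^{\perp}\subset\mathcal{I}$ so $[\mathcal{I},\mathcal{I}^{\perp}]=\{0\}$, and above all $T\circ\Lambda=0$, which kills the $\h$-valued obstruction $\lambda$ after $T$ is applied; combined with \eqref{Dom26Mayo} relating $\tau_{\xi}$ to $\lambda$ via $B_{\h}$, one gets $\tau_{\xi}(x)(\Theta(u))(y) = -B_{\h}(\lambda(x,y),\Theta(u)) = -B_{\h}(\Theta(\lambda(x,y)),u) = -B_{\h}\big((\text{$\h$-component of }T(\Lambda(x,y)))\,,\,u\big) = 0$, since $T(\Lambda(x,y))=0$. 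That last computation, once set up, finishes the claim; the delicate point is justifying the identity $B_{\h}(\lambda(x,y),\Theta(u)) = B_{\h}(\Theta(\lambda(x,y)),u)$, which follows from $\Theta$ being $B_{\h}$-self-adjoint as shown in \textbf{Lemma \ref{h is cuadratica}}.
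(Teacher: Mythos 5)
Your proposal is correct and, in its final paragraph, reduces to essentially the same argument as the paper: combine \eqref{Dom26Mayo} with $T\circ\Lambda=0$ (hence $T(\lambda(x,y))=0$) and move the twist map across the metric to conclude $\tau_{\xi}(x)(\Theta(u))(y)=-B_{\h}(\lambda(x,y),\Theta(u))=0$. The only cosmetic difference is that you invoke the $B_{\h}$-self-adjointness of $\Theta$ from \textbf{Lemma \ref{h is cuadratica}}, whereas the paper writes $B_{\h}(\lambda(x,y),\Theta(u))=B(\lambda(x,y),T(u))=B(T(\lambda(x,y)),u)$ using the self-adjointness of $T$ itself.
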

\begin{proof}
Since $\Lambda(x,y)=\lambda(x,y)+\mu(x,y)$, where $\lambda(x,y)$ is in $\h$ and $\mu(x,y)$ is in $\mathcal{I}^{\perp} \subset \Ker(T)$, then $T(\Lambda(x,y))=T(\lambda(x,y))=0$. By \eqref{Dom26Mayo} we have:
$$
\aligned
\tau_{\xi}(x)(\Theta(u))(y)&=-B_{\h}(\lambda(x,y),\Theta(u))=-B(\lambda(x,y),T(u))\\
&=-B(T(\lambda(x,y)),u)=0.
\endaligned
$$
Then, $\tau_{\xi}(x)(\Theta(u))=\xi(\tau(x)(\Theta(u)))=0$; hence $\tau(x)(\Theta(u))=0$. Therefore, \eqref{T4} can be written as:
\begin{equation}\label{T4-1}
L(\rho(x)(u))=\gamma(\phi(x),u)=\sigma(x)(L(u)).
\end{equation}
\end{proof}

Let $x,y$ be in $\s$; comparing the components in $\h$ and $\mathcal{I}^{\perp}$, from \eqref{descomposicion de corchete 2} and \eqref{descomposicion de T}, we see that $T([x,y])=[T(x),y]=[x,T(y)]$ is equivalent to:
\begin{align}
\label{T5} & \phi([x,y]_{\s})=\rho(x)(\phi(y))=-\rho(y)(\phi(x)),\,\,\,\text{ and }\tag{\textbf{C}}\\
\label{T6} & \varphi([x,y]_{\s})=\tau(x)(\phi(y))+\sigma(x)(\varphi(y)).
\end{align}

Using that $T \circ \Lambda=0$ (see \textbf{Prop. \ref{T y Lambda}}), we can simplify \eqref{T6}.

\begin{claim}\label{tau y phi}{\sl
$\tau(x)(\phi(y))=0$ for all $x,y$ in $\s$.}
\end{claim}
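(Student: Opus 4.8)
The plan is to imitate the proof of \textbf{Claim \ref{tau y T_h}} almost verbatim, replacing $\Theta(u)$ by $\phi(y)$: in both situations $\tau(x)$ is being applied to an element of $\h$, and in both situations the point is that pairing $\lambda(x,\,\cdot\,)$ against that element under $B_{\h}$ can be rewritten as a pairing against an element of $\Im(T)$ --- here $T(y)$ --- which is then moved across $T$ by self-adjointness and killed by $T \circ \Lambda = 0$. So the proof reduces to one short computation through the isomorphism $\xi$.

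Concretely, first I would record that since $\Lambda(x,y)=\lambda(x,y)+\mu(x,y)$ with $\mu(x,y) \in \mathcal{I}^{\perp} \subset \Ker(T)$, \textbf{Lemma \ref{T y Lambda}} gives $T(\lambda(x,y))=T(\Lambda(x,y))=0$ for all $x,y \in \s$. Next, since $\xi$ is injective (\textbf{Lemma \ref{el map xi}}) and $\tau_{\xi}(x)=\xi \circ \tau(x)$, it suffices to show $\tau_{\xi}(x)(\phi(y))=0$ in $\s^{\ast}$, i.e. $\tau_{\xi}(x)(\phi(y))(z)=0$ for every $z \in \s$. Applying \eqref{Dom26Mayo} with $u=\phi(y) \in \h$ gives
$$
\tau_{\xi}(x)(\phi(y))(z)=-B_{\h}(\lambda(x,z),\phi(y)).
$$
Because $\lambda(x,z) \in \h$ and $\varphi(y) \in \mathcal{I}^{\perp}$, formula \eqref{descomposicion de metrica} yields $B(\lambda(x,z),\varphi(y))=0$, hence $B_{\h}(\lambda(x,z),\phi(y))=B(\lambda(x,z),\phi(y)+\varphi(y))=B(\lambda(x,z),T(y))$. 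Using that $T$ is self-adjoint and $T(\lambda(x,z))=0$, this equals $B(T(\lambda(x,z)),y)=0$. Therefore $\tau_{\xi}(x)(\phi(y))=0$, and since $\xi$ is injective, $\tau(x)(\phi(y))=0$.

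There is no real obstacle here; the only care needed is bookkeeping in the three-block decomposition $\g=\s \oplus \h \oplus \mathcal{I}^{\perp}$ --- specifically, checking via \eqref{descomposicion de metrica} that $\varphi(y) \in \mathcal{I}^{\perp}$ is $B$-orthogonal to $\h$, so that replacing $\phi(y)$ by $T(y)$ inside $B$ is legitimate, and noting that \eqref{Dom26Mayo} holds for an arbitrary $u \in \h$. Once \textbf{Claim \ref{tau y phi}} is proved, \eqref{T6} simplifies to $\varphi([x,y]_{\s})=\sigma(x)(\varphi(y))$, in the same way \eqref{T4} simplified to \eqref{T4-1}.
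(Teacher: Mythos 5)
Your proof is correct and follows the paper's argument essentially verbatim: both apply \eqref{Dom26Mayo} with $u=\phi(y)$, replace $\phi(y)$ by $T(y)$ inside $B$ using $B(\h,\mathcal{I}^{\perp})=\{0\}$, and conclude via self-adjointness of $T$ together with $T(\lambda(x,z))=0$ from \textbf{Lemma \ref{T y Lambda}}. The only difference is that you make the orthogonality step $B(\lambda(x,z),\varphi(y))=0$ explicit, which the paper leaves implicit.
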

\begin{proof}
Let $x,y,z$ be in $\s$. By \eqref{Dom26Mayo}, we have:
$$
\aligned
\tau_{\xi}(x)(\phi(y))(z)&=-B_{\h}(\lambda(x,z),\phi(y))=-B(\lambda(x,z),\phi(y))\\
&=-B(\lambda(x,z),T(y))=-B(T(\lambda(x,z)),y)=0.
\endaligned
$$
Then, $\tau_{\xi}(x)(\phi(y))=\xi(\tau(x)(\phi(y)))=0$; hence, $\tau(x)(\phi(y))=0$. Therefore, \eqref{T6} can be written as:
\begin{equation}\label{P}
\varphi([x,y]_{\s})=\sigma(x)(\varphi(y)),\,\,\text{ for all }x,y \in \s.\tag{\textbf{D}}
\end{equation}
\end{proof}

\begin{Lemma}\label{ciclica 1}{\sl
Let $x,y$ be in $\s$ and $u$ in $\h$. Then, $[T(x),[y,u]]+[T(y),[u,x]]+[T(u),[x,y]]=0$, is equivalent to:
\begin{align}
\label{C1} & \rho([x,y]_{\s})\circ \Theta=\rho(x)\circ \rho(y)\circ \Theta-\rho(y)\circ \rho(x)\circ \Theta,\,\text{ and }\\
\label{C2} & \rho([x,y]_{\s})(u)-\rho(x)\circ \rho(y)(u)+\rho(y)\circ \rho(x)(u) \in \Ker(L).
\end{align}
}
\end{Lemma}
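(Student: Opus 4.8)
The plan is a direct computation in components with respect to the decomposition $\g=\s\oplus\h\oplus\mathcal{I}^{\perp}$. Fix $x,y$ in $\s$ and $u$ in $\h$. Since $T(x),T(y),T(u)\in\mathcal{I}$ (because $\Im(T)\subset\mathcal{I}$) and $\mathcal{I}$ is an ideal, each of $[T(x),[y,u]]$, $[T(y),[u,x]]$ and $[T(u),[x,y]]$ lies in $\mathcal{I}=\h\oplus\mathcal{I}^{\perp}$; hence the cyclic sum vanishes if and only if its $\h$-component and its $\mathcal{I}^{\perp}$-component both vanish, and the whole point is to identify these two conditions with \eqref{C1} and \eqref{C2}.

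First I would expand each bracket, substituting $T(x)=\phi(x)+\varphi(x)$, $[y,u]=\rho(y)(u)+\tau(y)(u)$, $[u,x]=-\rho(x)(u)-\tau(x)(u)$, $[x,y]=[x,y]_{\s}+\lambda(x,y)+\mu(x,y)$, $T(u)=\Theta(u)+L(u)$ via \eqref{descomposicion de corchete 2} and \eqref{descomposicion de T}, and multiplying out. The bulk of the terms die at once: the identities $[\h,\mathcal{I}^{\perp}]=\{0\}$ and ``$\mathcal{I}^{\perp}$ abelian'' annihilate every product in which one factor lies in $\mathcal{I}^{\perp}$, except $[L(u),[x,y]_{\s}]=-\sigma([x,y]_{\s})(L(u))$, and Claim \ref{tau y T_h} removes the $\mathcal{I}^{\perp}$-part $\tau([x,y]_{\s})(\Theta(u))$ of $[\Theta(u),[x,y]_{\s}]$. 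What survives is
\begin{equation*}
\begin{split}
\h\text{-part:}\quad & [\phi(x),\rho(y)(u)]_{\h}-[\phi(y),\rho(x)(u)]_{\h}-\rho([x,y]_{\s})(\Theta(u))+[\Theta(u),\lambda(x,y)]_{\h},\\
\mathcal{I}^{\perp}\text{-part:}\quad & \gamma(\phi(x),\rho(y)(u))-\gamma(\phi(y),\rho(x)(u))+\gamma(\Theta(u),\lambda(x,y))-\sigma([x,y]_{\s})(L(u)).
\end{split}
\end{equation*}

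Next I would collapse these using the centroid relations already available. In the $\h$-part, \eqref{T3} gives $[\phi(x),\rho(y)(u)]_{\h}=\ad_{\h}(\phi(x))(\rho(y)(u))=(\rho(x)\circ\rho(y)\circ\Theta)(u)$, and symmetrically for the $\phi(y)$ term, while $[\Theta(u),\lambda(x,y)]_{\h}=[u,\Theta(\lambda(x,y))]_{\h}=0$, since $\Theta$ lies in the centroid of $(\h,[\,\cdot\,,\,\cdot\,]_{\h})$ by \eqref{T1} and $\Theta(\lambda(x,y))=0$, the latter being a consequence of $T\circ\Lambda=0$ (Lemma \ref{T y Lambda}) and $\mu(x,y)\in\Ker(T)$. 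Thus the $\h$-part becomes $\bigl(\rho(x)\circ\rho(y)\circ\Theta-\rho(y)\circ\rho(x)\circ\Theta-\rho([x,y]_{\s})\circ\Theta\bigr)(u)$, whose vanishing is \eqref{C1}. In the $\mathcal{I}^{\perp}$-part, \eqref{T4-1} turns $\gamma(\phi(x),\rho(y)(u))$ into $L\bigl((\rho(x)\circ\rho(y))(u)\bigr)$ and $\sigma([x,y]_{\s})(L(u))$ into $L\bigl(\rho([x,y]_{\s})(u)\bigr)$, and the term $\gamma(\Theta(u),\lambda(x,y))$ drops out because \eqref{T2} forces $\gamma(\Theta(u),v)=\gamma(u,\Theta(v))$ (interchange $u,v$ in \eqref{T2} and use skew-symmetry of $[\,\cdot\,,\,\cdot\,]_{\h}$ and of $\gamma$), so it equals $\gamma(u,\Theta(\lambda(x,y)))=0$. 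Hence the $\mathcal{I}^{\perp}$-part equals $-L\bigl(\rho([x,y]_{\s})(u)-(\rho(x)\circ\rho(y))(u)+(\rho(y)\circ\rho(x))(u)\bigr)$, whose vanishing is \eqref{C2}; this finishes the proof.

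Apart from this, the argument is pure bookkeeping. The steps I expect to be the crux — and where an error would be easiest — are the two cancellations of the $\lambda(x,y)$-terms, $[\Theta(u),\lambda(x,y)]_{\h}=0$ and $\gamma(\Theta(u),\lambda(x,y))=0$: both hinge on $\Theta(\lambda(x,y))=0$, i.e.\ on Lemma \ref{T y Lambda}, together with the fact that $\Theta$ lies in the centroid of $(\h,[\,\cdot\,,\,\cdot\,]_{\h})$, which — combined with \eqref{T2} — is what lets $\Theta$ be moved from one argument of $[\,\cdot\,,\,\cdot\,]_{\h}$ and of $\gamma$ to the other. Careful sign-tracking in the terms produced by $[u,x]=-[x,u]$ and by moving an $\s$-element past an $\h$-element is the other place to watch.
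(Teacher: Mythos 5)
Your proof is correct and follows essentially the same route as the paper: a component-wise expansion of the three brackets relative to $\g=\s\oplus\h\oplus\mathcal{I}^{\perp}$, reduced using \eqref{T3}, \eqref{T4-1}, $[\h,\mathcal{I}^{\perp}]=\{0\}$ and $T\circ\Lambda=0$. The only cosmetic difference is that the paper first pushes $T$ through the bracket via the centroid identity, writing $[T(u),[x,y]]=[u,T([x,y])]=[u,\phi([x,y]_{\s})]$, so the $\lambda(x,y)$-terms that you cancel by hand (via $\Theta(\lambda(x,y))=0$ and the symmetry of $\gamma$ coming from \eqref{T2}) never appear.
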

\begin{proof}
As $\mathcal{I}^{\perp} \subset \Ker(T)$ and $[\h,\mathcal{I}^{\perp}]=\{0\}$, by \eqref{T3} and \eqref{T4-1} it follows:
$$
\aligned
[T(x),[y,u]]&=[T(x),\rho(y)(u)]=[\phi(x),\rho(y)(u)]\\
&=[\phi(x),\rho(y)(u)]_{\h}+\gamma(\phi(x),\rho(y)(u))\\
&=\underbrace{\rho(x)\circ \rho(y)(\Theta(u))}_{\in \h}+\underbrace{L(\rho(x)\circ \rho(y)(u))}_{\in \mathcal{I}^{\perp}}.
\endaligned
$$
Similarly: 
$$
[T(y),[u,x]]=\underbrace{-\rho(y)\circ \rho(x)(\Theta(u))}_{\in \h}-\underbrace{L(\rho(y)\circ \rho(x)(u))}_{\in \mathcal{I}^{\perp}}.
$$
Using that $T$ is in the centroid and $T \circ \Lambda=0$, we get:
$$
\aligned
[T(u),[x,y]]&=[T(u),[x,y]_{\s}]=[u,T([x,y]_{\s})]=[u,\phi([x,y]_{\s})]\\
&=[u,\phi([x,y]_{\s})]_{\h}+\gamma(u,\phi([x,y]_{\s}))\\
&=\underbrace{-\rho([x,y]_{\s})(\Theta(u))}_{\in \h}-\underbrace{L(\rho([x,y]_{\s})(u))}_{\in \mathcal{I}^{\perp}}.
\endaligned
$$
Comparing the components in $\h$ and $\mathcal{I}^{\perp}$ of $[T(x),[y,u]]$, $[T(y),[u,x]]$ and $[T(u),[x,y]]$, we obtain \eqref{C1} and \eqref{C2}.
\end{proof}

\begin{Lemma}\label{ciclica 2}{\sl
Let $x$ be in $\s$ and $u,v$ be in $\h$. Then, $[T(x),[u,v]]+[T(u),[v,x]]+[T(v),[x,u]]=0$, is equivalent to:
\begin{align}
\label{C4} & \ad_{\h}(\phi(x))([u,v]_{\h})=[\ad_{\h}(\phi(x)),u]_{\h}+[u,\ad_{\h}(\phi(x))(v)]_{\h},\,\text{ and }\\
\label{C5} & \rho(x)([u,v]_{\h})-[\rho(x)(u),v]_{\h}-[u,\rho(x)(v)]_{\h} \in \Ker(L).
\end{align}
Observe that \eqref{C4} says that $\ad_{\h}(\phi(x))$ is a derivation of $(\h,[\,\cdot\,,\,\cdot\,]_{\h})$. In addition, by \eqref{T3}, the expression \eqref{C4} can be written as:
\begin{equation}\label{C6}
\rho(x)([u,v]_{\h})-[\rho(x)(u),v]_{\h}-[u,\rho(x)(v)]_{\h} \in \Ker(\Theta).
\end{equation}}
\end{Lemma}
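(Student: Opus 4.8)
The plan is to imitate the computation in Lemma~\ref{ciclica 1}: expand each of $[T(x),[u,v]]$, $[T(u),[v,x]]$ and $[T(v),[x,u]]$ by means of \eqref{descomposicion de corchete 2} and \eqref{descomposicion de T}, split each into its $\h$- and $\mathcal{I}^{\perp}$-components, and then read off the two stated conditions from the vanishing of the cyclic sum (which has no $\s$-component, since every summand lies in $[\mathcal{I},\mathcal{I}]\subset\mathcal{I}=\h\oplus\mathcal{I}^{\perp}$). First I would discard the cross terms: since $\varphi(x),L(u),L(v)\in\mathcal{I}^{\perp}\subset\Ker(T)$, since $[u,v],[v,x],[x,u]\in\mathcal{I}$, and since $[\mathcal{I},\mathcal{I}^{\perp}]=\{0\}=[\h,\mathcal{I}^{\perp}]$ with $\mathcal{I}^{\perp}$ abelian, one gets $[T(x),[u,v]]=[\phi(x),[u,v]_{\h}]$, $[T(u),[v,x]]=-[\Theta(u),\rho(x)(v)]$ and $[T(v),[x,u]]=[\Theta(v),\rho(x)(u)]$. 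Expanding these brackets once more with \eqref{descomposicion de corchete 2}, the $\h$-component of the cyclic sum is $\ad_{\h}(\phi(x))([u,v]_{\h})-[\Theta(u),\rho(x)(v)]_{\h}+[\Theta(v),\rho(x)(u)]_{\h}$ and its $\mathcal{I}^{\perp}$-component is $\gamma(\phi(x),[u,v]_{\h})-\gamma(\Theta(u),\rho(x)(v))+\gamma(\Theta(v),\rho(x)(u))$.

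For the $\h$-component I would use that $\Theta$ lies in the centroid of $(\h,[\,\cdot\,,\,\cdot\,]_{\h})$ (this is \eqref{T1}, from Lemma~\ref{h is cuadratica}) together with $\ad_{\h}(\phi(x))=\Theta\circ\rho(x)=\rho(x)\circ\Theta$ (this is \eqref{T3}) to rewrite the three summands as $\Theta(\rho(x)([u,v]_{\h}))$, $-\Theta([u,\rho(x)(v)]_{\h})$ and $-\Theta([\rho(x)(u),v]_{\h})$ respectively; hence the $\h$-component equals $\Theta\bigl(\rho(x)([u,v]_{\h})-[\rho(x)(u),v]_{\h}-[u,\rho(x)(v)]_{\h}\bigr)$, whose vanishing is exactly \eqref{C6}. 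Running the same rewriting in reverse (again with \eqref{T1} and \eqref{T3}) shows \eqref{C6} is equivalent to \eqref{C4}, which is plainly the Leibniz identity for $\ad_{\h}(\phi(x))$.

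For the $\mathcal{I}^{\perp}$-component I would apply \eqref{T4-1} to write $\gamma(\phi(x),[u,v]_{\h})=L(\rho(x)([u,v]_{\h}))$, and \eqref{T2} to write $\gamma(\Theta(u),\rho(x)(v))=L([u,\rho(x)(v)]_{\h})$ and $\gamma(\Theta(v),\rho(x)(u))=-L([\rho(x)(u),v]_{\h})$; the component then collapses to $L\bigl(\rho(x)([u,v]_{\h})-[\rho(x)(u),v]_{\h}-[u,\rho(x)(v)]_{\h}\bigr)$, whose vanishing is \eqref{C5}. Since the cyclic sum is zero iff both of its components vanish, this gives the asserted equivalence. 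The argument is essentially bookkeeping; the one spot needing care is the rewriting in the $\h$-component, where \eqref{T1} and \eqref{T3} must be applied with the right arguments — using that $[u,v]_{\h}$, $\rho(x)(u)$, $\rho(x)(v)$ all lie in $\h$ so those identities are available — and the signs coming from $[v,x]=-[x,v]$ must be kept straight; beyond this I anticipate no real obstacle.
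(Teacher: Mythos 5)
Your proposal is correct and follows essentially the same route as the paper: expand each term of the cyclic sum via \eqref{descomposicion de corchete 2} and \eqref{descomposicion de T}, discard the cross terms using $\mathcal{I}^{\perp}\subset\Ker(T)$ and $[\mathcal{I},\mathcal{I}^{\perp}]=\{0\}$, and compare the $\h$- and $\mathcal{I}^{\perp}$-components using \eqref{T1}, \eqref{T2}, \eqref{T3} and \eqref{T4-1}. The only (immaterial) difference is that the paper moves $T$ to the second argument via the centroid property before expanding, whereas you expand $T(u)=\Theta(u)+L(u)$ directly and invoke \eqref{T1}; the resulting components and bookkeeping are identical.
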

\begin{proof}
Using \eqref{T2}, \eqref{T3}, and the arguments of \textbf{Lemma \ref{ciclica 1}, }we get:
$$
\aligned
[T(u),[v,x]]&=-[T(u),\rho(x)(v)]=-[u,T(\rho(x)(v))]=-[u,\Theta(\rho(x)(v))]\\
\,&=-[u,\ad_{\h}(\phi(x)(v))]_{\h}-\gamma(u,\Theta(\rho(x)(v)))\\
\,&=\underbrace{-[u,\ad_{\h}(\phi(x)(v))]_{\h}}_{\in \h}-\underbrace{L([u,\rho(x)(v)]_{\h})}_{\in \mathcal{I}^{\perp}}.
\endaligned
$$
Similarly, 
$$
[T(v),[x,u]]=-\underbrace{[\ad_{\h}(\phi(x)(u)),v]_{\h}}_{\in \h}-\underbrace{L([\rho(x)(u),v]_{\h})}_{\in \mathcal{I}^{\perp}}.
$$
By \eqref{T4} we obtain:
$$
\aligned
[T(x),[u,v]]&=[T(x),[u,v]_{\h}]=[\phi(x),[u,v]_{\h}]\\
&=[\phi(x),[u,v]_{\h}]_{\h}+\gamma(\phi(x),[u,v]_{\h})\\
&=\underbrace{\ad_{\h}(\phi(x))([u,v]_{\h})}_{\in \h}+\underbrace{L(\rho(x)([u,v]_{\h}))}_{\in \mathcal{I}^{\perp}}.
\endaligned
$$
Comparing the components in $\h$ and $\mathcal{I}^{\perp}$ of $[T(u),[v,x]]$, $[T(v),[x,u]]$ and $[T(x),[u,v]]$, we obtain \eqref{C4} and \eqref{C5}.
\end{proof}

We gather the expressions \eqref{C1}, \eqref{C4} and \eqref{C6}, in the following result. 

\begin{Lemma}\label{Im T}{\sl
The map $\rho^{\prime}:\s \to \Der(\Im(\Theta))$, $x \mapsto \rho(x)\vert_{\Im(\Theta)}$ is a Lie algebra representation of $(\s,[\,\cdot\,,\,\cdot\,]_{\s})$.}
\end{Lemma}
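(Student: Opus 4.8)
The plan is to obtain the statement by restricting to $\Im(\Theta)$ the three identities it gathers: \eqref{T3} shows that each $\rho(x)$ preserves $\Im(\Theta)$, the centroid relation \eqref{T1} together with \eqref{C6} (equivalently \eqref{C4}) gives the derivation property, and \eqref{C1} gives the homomorphism property.

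First I would use \eqref{T3}, which says $\rho(x)\circ\Theta=\Theta\circ\rho(x)$, to conclude that for $w=\Theta(u)\in\Im(\Theta)$ one has $\rho(x)(w)=\Theta(\rho(x)(u))\in\Im(\Theta)$; thus $\rho^{\prime}(x):=\rho(x)\vert_{\Im(\Theta)}$ is a well-defined endomorphism of $\Im(\Theta)$. Since $\Theta$ is in the centroid of $(\h,[\,\cdot\,,\,\cdot\,]_{\h})$ by \eqref{T1}, we have $[\Theta(a),b]_{\h}=[a,\Theta(b)]_{\h}=\Theta([a,b]_{\h})$ for all $a,b\in\h$; in particular $\Im(\Theta)$ is an ideal of $(\h,[\,\cdot\,,\,\cdot\,]_{\h})$, so $[\,\cdot\,,\,\cdot\,]_{\h}$ restricts to $\Im(\Theta)$. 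Applying this relation repeatedly, the Jacobiator of $(\h,[\,\cdot\,,\,\cdot\,]_{\h})$ evaluated on elements of $\Im(\Theta)$ equals a power of $\Theta$ applied to the Jacobiator of $(\h,[\,\cdot\,,\,\cdot\,]_{\h})$, which lies in $\Ker(\Theta)$; hence it vanishes and $(\Im(\Theta),[\,\cdot\,,\,\cdot\,]_{\h})$ is a Lie algebra (so $\Der(\Im(\Theta))$ is a Lie algebra under the commutator bracket).

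Next I would verify $\rho^{\prime}(x)\in\Der(\Im(\Theta))$. For $w=\Theta(u)$ and $w^{\prime}$ in $\Im(\Theta)$, using \eqref{T3} and $[\Theta(a),b]_{\h}=[a,\Theta(b)]_{\h}=\Theta([a,b]_{\h})$ one rewrites
$$
\aligned
&\rho(x)([w,w^{\prime}]_{\h})-[\rho(x)(w),w^{\prime}]_{\h}-[w,\rho(x)(w^{\prime})]_{\h}\\
&\qquad=\Theta\bigl(\rho(x)([u,w^{\prime}]_{\h})-[\rho(x)(u),w^{\prime}]_{\h}-[u,\rho(x)(w^{\prime})]_{\h}\bigr).
\endaligned
$$
By \eqref{C6} the argument of $\Theta$ on the right-hand side lies in $\Ker(\Theta)$, so the left-hand side is $0$; that is, $\rho^{\prime}(x)$ is a derivation of $(\Im(\Theta),[\,\cdot\,,\,\cdot\,]_{\h})$.

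Finally, restricting \eqref{C1} to $\Im(\Theta)$: for $w=\Theta(u)\in\Im(\Theta)$ one has $\rho^{\prime}([x,y]_{\s})(w)=\rho([x,y]_{\s})(\Theta(u))=\rho(x)\rho(y)(\Theta(u))-\rho(y)\rho(x)(\Theta(u))=[\rho^{\prime}(x),\rho^{\prime}(y)](w)$, so $\rho^{\prime}([x,y]_{\s})=[\rho^{\prime}(x),\rho^{\prime}(y)]$. Combined with the previous step, this shows that $\rho^{\prime}:\s\to\Der(\Im(\Theta))$ is a Lie algebra representation of $(\s,[\,\cdot\,,\,\cdot\,]_{\s})$. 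The only step that needs care is the derivation identity, where one must apply the centroid relation \eqref{T1} in the correct argument to bring $\Theta$ to the front and land exactly in the hypothesis of \eqref{C6} (or \eqref{C4}); everything else is a straightforward restriction of identities already established.
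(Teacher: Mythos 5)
Your proof is correct and follows essentially the same route as the paper: \eqref{T3} gives the invariance of $\Im(\Theta)$ under $\rho(x)$, factoring a $\Theta$ out of the Jacobi-type expression via the centroid relation and then invoking \eqref{C6} gives the derivation property (the paper does the same computation by applying $\Theta$ to \eqref{C6} and substituting $v\mapsto\Theta(v)$), and restricting \eqref{C1} gives the homomorphism property. Your additional observation that $(\Im(\Theta),[\,\cdot\,,\,\cdot\,]_{\h})$ is itself a Lie algebra is correct but is not needed and does not appear in the paper's argument.
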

\begin{proof}
Let $x$ be in $\s$. We know by \eqref{T3} that $\rho(x)$ and $\Theta$ commute, then $\rho(x)(\Im(\Theta)) \subset \Im(\Theta)$. Let $u,v$ be in $\h$; applying $\Theta$ to \eqref{C6}, we obtain:
\begin{equation}\label{viernes24}
\rho(x)([\Theta(u),v]_{\h})=[\rho(x)(\Theta(u)),v]_{\h}+[\Theta(u),\rho(x)(v)]_{\h}.
\end{equation}
Replace $v$ by $\Theta(v)$ in \eqref{viernes24} and observe that $\rho(x)([\Theta(u),\Theta(v)]_{\h})=\rho^{\prime}(x)([\Theta(u),\Theta(v)]_{\h})$, as $\Theta$ is in the centroid of $\h$. Then we deduce from \eqref{viernes24} that $\rho^{\prime}(x)=\rho(x)\vert_{\Im(\Theta)}$ is a derivation of $\Im(\Theta)$.
\smallskip

Let $w=\rho(x)(\Theta(u))=\Theta(\rho(x)(u))$, which belongs to $\Im(\Theta)$. Then $\rho(x)(w)=\rho^{\prime}(x)(w)=\rho^{\prime}(x)\circ \rho^{\prime}(y)(\Theta(u))$. Thus by \eqref{C1} we have:
$$
\rho^{\prime}([x,y]_{\s})(\Theta(u))=\rho^{\prime}(x)\circ \rho^{\prime}(y)(\Theta(u))-\rho^{\prime}(y)\circ \rho^{\prime}(x)(\Theta(u)),
$$
hence $\rho^{\prime}:\s \to \Der(\Im(\Theta))$ is a Lie algebra representation of $\s$. 
\end{proof}

Let us make the identifications $\mathcal{I}^{\perp}=\s^{\ast}$ and:
$$
\sigma=\ad_{\s}^{\ast},\quad \mu=\mu_{\xi},\quad \tau=\tau_{\xi},\quad  \gamma=\gamma_{\xi},\quad \mathcal{I}^{\perp}=\s^{\ast},\quad \varphi=\varphi_{\xi},\quad L=L_{\xi},
$$
induced by $\xi:\mathcal{I}^{\perp} \to \s^{\ast}$ (see \textbf{Lemma \ref{el map xi}}), in order to state a similar construction to the double extension for classical Lie algebras. 
\smallskip

We may notice that there is a lot of data to deal with because unlike classical Lie algebras, we have a linear map $T$ and the simple Lie algebra $\s$ is not a subalgebra of $\g$ (see \cite{Figueroa}, \textbf{Proposition A.1} and \textbf{Example} below). However, we have proved that $T \circ \Lambda=0$, which simplified some expressions (see \textbf{Claims \eqref{tau y T_h}} and \eqref{tau y phi}).
\begin{Theorem}\label{teorema}{\sl
Let $(\h,[\,\cdot\,,\,\cdot\,]_{\h},\Theta,B_{\h})$ be a quadratic Hom-Lie algebra with twist map $\Theta$ is in the centroid. Let $(\s,[\,\cdot\,,\,\cdot\,]_{\s})$ be a Lie algebra, and suppose that there are linear maps:
$$
\phi:\s \to \h, \quad \varphi:\s \to \s^{\ast},\quad \rho:\s \to \mathfrak{o}(B_{\h}),\quad \tau:\s \to \Hom(\h;\s^{\ast})
$$
and a skew-symmetric bilinear map $\mu:\s \times \s \to \s^{\ast}$, satisfying:
\smallskip

\textbf{(A)} $\varphi(x)(y)=\varphi(y)(x)$, for all $x,y$ in $\s$. 
\smallskip

\textbf{(B)} $\Theta \circ \rho(x)\!=\!\ad_{\h}(\phi(x))\!=\!\rho(x)\circ \Theta$ belongs to $\Der(\h)$, for all $x$ in $\s$.
\smallskip

\textbf{(C)} $\phi([x,y]_{\s})=\rho(x)(\phi(y))$, for all $x,y$ in $\s$.
\smallskip

\textbf{(D)} $\varphi([x,y]_{\s})=\ad_{\s}^{\ast}(x)(\varphi(y))$, for all $x,y$ in $\s$.
\smallskip

\textbf{(E)} The map $\rho^{\prime}:\s \to \Der(\Im(\Theta))$, $\rho^{\prime}(x)=\rho(x)\vert_{\Im(\Theta)}$, is a Lie algebra representation of $(\s,[\,\cdot\,,\,\cdot\,]_{\s})$.
\smallskip

\textbf{(F)} $\!\tau(x)\!\circ \!\Theta\!=\!\tau(x)\!\circ \!\phi\!=\!0$, and $\tau(x)(u)(x)\!=\!0$, for all $x \in \s$ and $u \in \h$.
\smallskip

\textbf{(G)} $\mu(x,y)(z)=\mu(y,z)(x)$ for all $x,y,z$ in $\s$.
\smallskip

Then in the vector space $\g=\s \oplus \h \oplus \s^{\ast}$, there exists a quadratic Hom-Lie algebra $(\g,[\,\cdot\,,\,\cdot\,],T,B)$ with twist map $T$ in the centroid.
}
\end{Theorem}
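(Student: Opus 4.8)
The plan is to reverse-engineer the constructions carried out in \S 2 and in Lemmas \ref{el map xi}--\ref{Im T}: starting from the abstract data $(\h,[\,\cdot\,,\,\cdot\,]_{\h},\Theta,B_{\h})$, $(\s,[\,\cdot\,,\,\cdot\,]_{\s})$, $\phi,\varphi,\rho,\tau,\mu$, define a bracket $[\,\cdot\,,\,\cdot\,]$, a twist map $T$ and a bilinear form $B$ on $\g=\s\oplus\h\oplus\s^{\ast}$ by exactly the formulas \eqref{descomposicion de corchete 2}, \eqref{descomposicion de T}, \eqref{descomposicion de metrica}, with $\sigma=\ad_{\s}^{\ast}$ and with the maps $\gamma,L$ recovered from $\rho,\phi$ via the invariance relations \eqref{gamma y rho 1} and \eqref{sf0}. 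Concretely, I would \emph{define} $\gamma:\h\times\h\to\s^{\ast}$ by $\gamma(u,v)(x)=B_{\h}(\rho(x)(u),v)$ — this is well defined and skew-symmetric in $u,v$ because each $\rho(x)\in\mathfrak{o}(B_{\h})$ — and $L:\h\to\s^{\ast}$ by $L(u)(x)=B_{\h}(u,\phi(x))$. Then $B$ is manifestly symmetric (by (A) and (G) applied as needed) and non-degenerate, since on $\s\times\s^{\ast}$ it is the canonical pairing and on $\h\times\h$ it is the non-degenerate $B_{\h}$.

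After setting up these definitions, the verification splits into three independent blocks. First, \emph{$B$ is invariant and $T$ is $B$-self-adjoint}: self-adjointness of $T$ follows from (A) (the $\s^{\ast}$-component of $T$ is symmetric) and from (B)--(C) together with the definition of $L$ via $\phi$ (the mixed $\h$/$\s^{\ast}$ terms match); invariance $B(x,[y,z])=B([x,y],z)$ reduces to finitely many cases according to which summands $x,y,z$ lie in, and each case is precisely one of \eqref{mu es ciclica}, \eqref{Dom26Mayo}, \eqref{gamma y rho 1}, \eqref{descomposicion de metrica}, now read as definitions — the cyclicity hypotheses (A), (D), (G) and the orthogonality $B(\h,\s^{\ast})=0$, $[\mathcal{I},\mathcal{I}^{\perp}]=0$ are exactly what is needed. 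Second, \emph{$T$ is in the centroid}, i.e. $T([a,b])=[T(a),b]$ for all $a,b\in\g$: again split by summands; the cases $(\h,\h)$, $(\s,\h)$, $(\s,\s)$ are literally the systems \eqref{T1}--\eqref{T2}, \eqref{T3}--\eqref{T4-1}, \eqref{T5}--\eqref{P}, which hold because of (B), (C), (D), (F) and the definitions of $\gamma,L$; the remaining cases involve $\s^{\ast}\subset\Ker(T)$ and $[\h,\s^{\ast}]=0$ and are immediate.

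Third — and this is the main obstacle — one must check the \emph{Hom-Lie Jacobi identity} $[T(a),[b,c]]+[T(b),[c,a]]+[T(c),[a,b]]=0$. This is the heaviest computation: there are $\binom{3+2}{3}$-type cases depending on the placement of $a,b,c$ among $\s,\h,\s^{\ast}$, and each produces an $\h$-component identity and an $\s^{\ast}$-component identity. The cases with at least one argument in $\s^{\ast}$ collapse quickly since $\Im(T)\subset\h\oplus\s^{\ast}$ kills $T$ on $\s^{\ast}$ and $[\h,\s^{\ast}]=[\s^{\ast},\s^{\ast}]=0$, leaving only the $\sigma=\ad_{\s}^{\ast}$ interaction, which is handled by the fact that $\ad_{\s}^{\ast}$ is a representation plus (D). The substantive cases are $(\s,\s,\s)$, $(\s,\s,\h)$ and $(\s,\h,\h)$: the first reduces, after projecting, to the Jacobi identity of $(\s,[\,\cdot\,,\,\cdot\,]_{\s})$ together with a consistency relation among $\mu,\phi,\rho$ that follows from (C), (D) and (G); the second is exactly Lemma \ref{ciclica 1}, whose $\h$-part is (C1) and whose $\s^{\ast}$-part (C2) follows because, by (B) and (F), $\rho([x,y]_{\s})-\rho(x)\rho(y)+\rho(y)\rho(x)$ vanishes on $\Im(\Theta)$ by (E), while on $\Ker(\Theta)$ its composite with $L$ vanishes since $L$ factors through $\phi=\Theta\circ(\cdot)$ up to the centroid relation; the third is Lemma \ref{ciclica 2}, whose $\h$-part (C4) is the derivation property packaged in (B) and whose $\s^{\ast}$-part (C6) again uses (E) restricted to $\Im(\Theta)$ and $L\!\circ\!\Theta$-vanishing on the complement. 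Finally, nilpotence of $T$ follows formally: $\Im(T)\subset\h\oplus\s^{\ast}$, $T$ vanishes on $\s^{\ast}$, $\Theta$ is nilpotent on $\h$, and $L$ maps into $\s^{\ast}\subset\Ker(T)$, so a power of $T$ kills everything. I would organize the write-up so that each verification block cites the corresponding labelled equation from \S 2, thereby keeping the routine case-checking to a minimum and isolating the genuinely new content in the $(\s,\s,\s)$ case of the Jacobi identity.
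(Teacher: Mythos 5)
Your proposal is correct and follows essentially the same route as the paper: the paper's own proof likewise defines $L$, $\lambda$, $\gamma$, $T$, the bracket and $B$ by the formulas \eqref{sf0}, \eqref{Dom26Mayo}, \eqref{gamma y rho 1}, \eqref{descomposicion de T}, \eqref{descomposicion de corchete 2}, \eqref{descomposicion de metrica} and then states that the verification consists of reversing the computations of \S 2, with \textbf{(A)}--\textbf{(G)} matching the labelled identities. Your write-up is in fact more explicit than the paper's about which hypothesis handles which case of the Hom-Lie Jacobi identity.
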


\begin{proof}

Let $L:\h \to \s^{\ast}$ be defined by $L(u)(x)=B_{\h}(u,\phi(x))$, for all $u$ in $\h$ and $x$ in $\s$ (see \eqref{sf0}). We define $T:\g \to \g$ as in \eqref{descomposicion de T}:
\begin{equation}\label{descripcion de T-teorema}
\begin{split}
x \in \s, \quad & T(x)=\phi(x)+\varphi(x),\\
u \in \h, \quad & T(u)=\Theta(u)+L(u),\\
\alpha \in \s^{\ast},\quad & T(\alpha)=0,
\end{split}
\end{equation}
Let $x,y$ be in $\s$. The map $u \mapsto -\tau(x)(u)(y)$, belongs to $\h^{\ast}$. Due to $B_{\h}$ is non-degenerate, there exists a unique element $\lambda(x,y)$ in $\h$ such that $-\tau(x)(u)(y)=B_{\h}(\lambda(x,y),u)$, for all $u$ in $\h$. Let $\lambda:\s \times \s \to \h$ be defined by $(x,y) \mapsto \lambda(x,y)$ (see \eqref{Dom26Mayo}).
\smallskip

Consider $\gamma:\h \times \h \to \s^{\ast}$ defined by: $\gamma(u,v)(x)=B_{\h}(\rho(x)(u),v)$, for all $u,v$ in $\h$ and $x$ in $\s$ (see \eqref{gamma y rho 1}). Let $[\,\cdot\,,\,\cdot\,]:\g \times \g \to \g$, be the bracket defined as in \eqref{descomposicion de corchete 2}:
\begin{equation}\label{corchete Hom-Lie T}
\begin{split}
x,y \in \s,\,\quad & [x,y]=[x,y]_{\s}+\lambda(x,y)+\mu(x,y),\\
x \in \s,\,u \in \h,\quad & [x,u]=\rho(x)(u)+\tau(x)(u),\\
x \in \s,\,\alpha \in \s^{\ast},\quad & [x,\alpha]=\ad_{\s}^{\ast}(x)(\alpha),\\
u,v \in \h,\quad & [u,v]=[u,v]_{\h}+\gamma(u,v),
\end{split}
\end{equation}
We define a non-degenerate symmetric bilinear form $B$ in $\g$, by:
\begin{equation}\label{metrica T}
B(x+u+\alpha,y+v+\beta)=\alpha(y)+\beta(x)+B_{\h}(u,v),
\end{equation}
where $x,y$ are in $\s$, $u,v$ are in $\h$ and $\alpha,\beta$ are in $\s^{\ast}$. The proof that $(\g,[\,\cdot\,,\,\cdot\,],T,B)$ is a quadratic Hom-Lie algebra with twist map $T$ in the centroid, consists only on reversing the process we did previously. For that reason we put \textbf{(A)}, \textbf{(B)}, \textbf{(C)}, \textbf{(D)} to highlight the hypothesis needed. The hypothesis in \textbf{(E)} corresponds to \textbf{Lemma \ref{Im T}}. The hypothesis in \textbf{(F)} leads to $T(\lambda(x,y))=0$ (see \textbf{Prop. \ref{T y Lambda}}). The hypothesis in \textbf{(G)} corresponds to the fact that $\mu$ is cyclic (see \eqref{mu es ciclica}).

\end{proof}

\begin{Cor}\label{corolario}{\sl
Any indecomposable quadratic Hom-Lie algebra, which is not a Lie algebra, having nilpotent twist map in the centroid, can be constructed as in \textbf{Thm. \ref{teorema}}.}
\end{Cor}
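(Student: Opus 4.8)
The plan is to run the analysis of \S 2 in reverse. Given an indecomposable quadratic Hom-Lie algebra $(\g,[\,\cdot\,,\,\cdot\,],T,B)$ with nilpotent twist map $T$ in the centroid which is not a Lie algebra, the bracket $[\,\cdot\,,\,\cdot\,]$ fails the classical Jacobi identity, so every standing assumption of \S 2 holds. I will extract from $\g$ its structure data, check that this data satisfies hypotheses \textbf{(A)}--\textbf{(G)} of \textbf{Thm. \ref{teorema}}, and then verify that the Hom-Lie algebra which \textbf{Thm. \ref{teorema}} builds out of this data is $\g$ itself.

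First I would produce the data. By \textbf{Prop. \ref{ideal maximal}} there is a maximal ideal $\mathcal{I}$ with $\Ker(T)+\Im(T)\subset\mathcal{I}$ and $\g/\mathcal{I}$ a simple Lie algebra; indecomposability forces $\mathcal{I}\cap\mathcal{I}^{\perp}\neq\{0\}$, and minimality of $\mathcal{I}^{\perp}$ forces $\mathcal{I}^{\perp}\subset\mathcal{I}$. As in the paragraph preceding \textbf{Lemma \ref{el map xi}} I pick $\h$ with $\mathcal{I}=\h\oplus\mathcal{I}^{\perp}$ and an isotropic $\s$ with $\g=\s\oplus\h\oplus\mathcal{I}^{\perp}$; then $(\s,[\,\cdot\,,\,\cdot\,]_{\s})$ is a simple Lie algebra and \eqref{descomposicion de corchete 2}, \eqref{descomposicion de metrica}, \eqref{descomposicion de T} provide the maps $\lambda,\mu,\rho,\tau,\sigma,[\,\cdot\,,\,\cdot\,]_{\h},\gamma,\phi,\varphi,\Theta,L$. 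Using the isomorphism $\xi:\mathcal{I}^{\perp}\to\s^{\ast}$ of \textbf{Lemma \ref{el map xi}} I make the identifications $\mathcal{I}^{\perp}=\s^{\ast}$, $\sigma=\ad_{\s}^{\ast}$, $\mu=\mu_{\xi}$, $\tau=\tau_{\xi}$, $\gamma=\gamma_{\xi}$, $\varphi=\varphi_{\xi}$, $L=L_{\xi}$ already set up before \textbf{Thm. \ref{teorema}}.

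Next I would check the hypotheses of \textbf{Thm. \ref{teorema}}. That $(\h,[\,\cdot\,,\,\cdot\,]_{\h},\Theta,B_{\h})$ is a quadratic Hom-Lie algebra with $\Theta$ nilpotent in the centroid is \textbf{Lemma \ref{h is cuadratica}}, and $(\s,[\,\cdot\,,\,\cdot\,]_{\s})$ is a simple Lie algebra by \textbf{Prop. \ref{ideal maximal}}. The condition $\rho(x)\in\mathfrak{o}(B_{\h})$ follows from invariance of $B$: since $B([x,u],v)=-B([x,v],u)$ and $B(\h,\mathcal{I}^{\perp})=\{0\}$, only the $\h$-components of the brackets contribute, giving $B_{\h}(\rho(x)(u),v)=-B_{\h}(u,\rho(x)(v))$. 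Then \textbf{(A)} is \eqref{sf2}; \textbf{(B)} is \eqref{T3} together with \eqref{C4} (which says $\ad_{\h}(\phi(x))\in\Der(\h)$); \textbf{(C)} is \eqref{T5}; \textbf{(D)} is \eqref{P}; \textbf{(E)} is \textbf{Lemma \ref{Im T}}; \textbf{(G)} is \eqref{mu es ciclica}; and for \textbf{(F)}, $\tau(x)\circ\Theta=0$ is \textbf{Claim \ref{tau y T_h}}, $\tau(x)\circ\phi=0$ is \textbf{Claim \ref{tau y phi}}, while $\tau(x)(u)(x)=-B_{\h}(\lambda(x,x),u)=0$ by \eqref{Dom26Mayo} and the skew-symmetry of $\lambda$ (a component of the skew bracket $[\,\cdot\,,\,\cdot\,]$).

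Finally I would apply \textbf{Thm. \ref{teorema}} to this data, obtaining on $\s\oplus\h\oplus\s^{\ast}$ a quadratic Hom-Lie algebra $(\g^{\prime},[\,\cdot\,,\,\cdot\,]^{\prime},T^{\prime},B^{\prime})$ with twist map in the centroid, and show that the identity of $\s\oplus\h\oplus\s^{\ast}$, under the identification $\mathcal{I}^{\perp}=\s^{\ast}$, is an isomorphism $\g^{\prime}\to\g$ of quadratic Hom-Lie algebras. Indeed the bracket \eqref{corchete Hom-Lie T} is term-by-term \eqref{descomposicion de corchete 2}, the twist map \eqref{descripcion de T-teorema} is \eqref{descomposicion de T} (using $\mathcal{I}^{\perp}\subset\Ker(T)$), and the metric \eqref{metrica T} is \eqref{descomposicion de metrica}; the only thing needing attention is that the maps $\lambda$, $\gamma$, $L$ which \textbf{Thm. \ref{teorema}} reconstructs from $\tau$, $\rho$, $\phi$ coincide with the originals, and this is exactly \eqref{Dom26Mayo}, \eqref{gamma y rho 1} and \eqref{sf0}. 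The argument is essentially bookkeeping; the one point that requires a little care — and the only ingredient not already isolated in \S 2 — is this final matching of the reconstructed maps with the originals, together with the short verification that $\rho$ takes values in $\mathfrak{o}(B_{\h})$.
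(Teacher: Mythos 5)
Your proposal is correct and follows essentially the same route as the paper: decompose $\g=\s\oplus\h\oplus\mathcal{I}^{\perp}$ via \textbf{Prop.~\ref{ideal maximal}}, identify $\mathcal{I}^{\perp}$ with $\s^{\ast}$ through $\xi$ from \textbf{Lemma~\ref{el map xi}}, and observe that the map $x+u+\alpha\mapsto x+u+\xi(\alpha)$ is an isometric isomorphism onto the Hom-Lie algebra built by \textbf{Thm.~\ref{teorema}}. You are simply more explicit than the paper's two-line proof in verifying hypotheses \textbf{(A)}--\textbf{(G)} (e.g.\ that $\rho$ lands in $\mathfrak{o}(B_{\h})$ and that $\tau(x)(u)(x)=0$ via $\lambda(x,x)=0$) and in matching the reconstructed $\lambda,\gamma,L$ with the originals, which is worthwhile bookkeeping but not a different argument.
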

\begin{proof}
Let $\g$ be an indecomposable quadratic Hom-Lie algebra with nilpotent twist map in the centroid, such that $\g$ is not a Lie algebra. Then $\g=\s \oplus \h \oplus \mathcal{I}^{\perp}$, where $\mathcal{I}=\h \oplus \mathcal{I}^{\perp}$ is a maximal ideal of $\g$ and $\s$ is a simple Lie algebra (see \textbf{Lemma \ref{ideal maximal}}). Consider the map $\Omega:\g \to \s \oplus \h \oplus \s^{\ast}$, defined by $\Omega( x+u+\alpha)=x+u+\xi(\alpha)$, where the Hom-Lie algebra on $\s \oplus \h \oplus \s^{\ast}$ is given by \textbf{Thm. \ref{teorema}} and $\xi:\mathcal{I}^{\perp} \to \s^{\ast}$ is the map of \textbf{Lemma \ref{el map xi}}. Then $\Omega$ is an isometry. 
\end{proof}

\section{Example}

Let $\s=\mathfrak{sl}_3(\F)=\operatorname{Span}_{\F}\{e_{11}-e_{22},e_{22}-e_{33},e_{12},e_{13},e_{21},e_{23},e_{31},e_{32}\}$ be the space of $3 \times 3$ matrices over $\F$ having trace zero, where $e_{ij}$ is the matrix whose entry $(i,j)$ is equal to 1 and zero elsewhere. We consider $\s$ with the usual bracket $[x,y]_{\s}=xy-yx$, for all $x,y$ in $\s$. 
\smallskip

Let us write $x_1=e_{11}-e_{22}$, $x_2=e_{22}-e_{33}$, $x_3=e_{12}$, $x_4=e_{13}$, $x_5=e_{21}$, $x_6=e_{23}$, $x_7=e_{31}$ and $x_8=e_{32}$. 
\smallskip

Let $\mu_{ijk}$ be in $\F$ satisfying the cyclic and antisymmetric conditions: 
$$
\mu_{ijk}=\mu_{jki}=-\mu_{jik},\quad \text{ for all }1 \leq i,j,k \leq 8.
$$
Let $\s^{\ast}=\operatorname{Span}_{\F}\{\alpha_1,\ldots,\alpha_8\}$, where $\alpha_j(x_k)=\delta_{jk}$. We define the bilinear map $\mu:\s \times \s \to \s^{\ast}$, by $\mu(x_j,x_k)=\mu_{1jk}\alpha_i+\ldots+\mu_{8jk}\alpha_i$, for all $1 \leq j,k \leq 8$. Then $\mu$ is skew-symmetric and $\mu(x_j,x_k)(x_i)=\mu_{ijk}=\mu_{jki}=\mu(x_k,x_i)(x_j)$, for all $i,j,k$. In $\g=\s \oplus \s^{\ast}$, consider the bracket $[\,\cdot\,,\,\cdot\,]$ defined by: $[x+\alpha,y+\beta]=[x,y]_{\s}+\mu(x,y)+\ad_{\s}^{\ast}(x)(\beta)-\ad_{\s}^{\ast}(y)(\alpha)$, and the bilinear form $B$, by $B(x+\alpha,y+\beta)=\alpha(y)+\beta(x)$, where $x,y$ are in $\s$ and $\alpha,\beta$ are in $\s^{\ast}$. 
\smallskip

Let $K_{\s}$ be the Killing form of $\s$ and let $T:\g \to \g$ be the map defined by $T(x+\alpha)=K_{\s}(x,\,\cdot\,)$, for all $x$ in $\s$ and $\alpha$ in $\s^{\ast}$. By \textbf{Thm. \ref{teorema}}, the triple $(\g,[\,\cdot\,,\,\cdot\,],T,B)$ is a quadratic Hom-Lie algebra with twist map $T$ in the centroid. Observe that in this case, $\h=0$ and $\varphi:\s \to \s^{\ast}$, $x \mapsto K_{\s}(x,\,\cdot\,)$ satisfies \textbf{(A)} and \textbf{(D)} of \textbf{Thm. \ref{teorema}}.
\smallskip

We let $\mu_{123}=0$, $\mu_{\ell,1,3}=\mu_{\ell,2,3}=0$, for $5 \leq \ell \leq 8$ and $\mu_{234} \neq 0$, then $\g$ is not a Lie algebra. A straightforward calculation shows that:
$$
\aligned
& [x_1,[x_2,x_3]]=-2x_3-\mu_{234}\alpha_4,\\
& [x_2,[x_3,x_1]]=2x_3-2\mu_{234},\quad \text{ and }\quad [x_3,[x_1,x_2]]=0.\\
\endaligned
$$
Then, $[x_1,[x_2,x_3]]+[x_2,[x_3,x_1]]+x_3,[x_1,x_2]]=-3\mu_{234}\alpha_4 \neq 0$.

\begin{Remark}{\rm
This example shows that $\s$ is not a subalgebra of $\g$, as $\mu \neq 0$. This is a main difference from classical Lie algebras where $\s$ is always a subalgebra (see \textbf{Prop. A.1} in \cite{Figueroa}). We can carry out a similar example with $\s=\mathfrak{sl}_2$. Since $\mathfrak{sl}_2$ is the only simple Lie algebra for which $\Der(\mathfrak{sl}_2)=\mathfrak{o}(K_{\mathfrak{sl}_2})$, then we would have $[\s,\s] \subset \s$.}
\end{Remark}

\section*{Acknowledgements}

The author thanks the support provided by post-doctoral fellowship CONAHCYT 769309. The author has no conflicts to disclose.


\begin{thebibliography}{XXXX}
\addcontentsline{toc}{chapter}{Bibliography}







\bibitem{Bajo} Bajo I., Benayadi S., \emph{Lie algebras with quadratic dimension equal to 2.} Journal of Pure and Applied Algebra, \textbf{209}, Issue 3, (2007), 725-737. https://doi.org/10.1016/j.jpaa.2006.07.010


\bibitem{Bajo 2} Bajo I., Benayadi S., \emph{Lie algebras admitting a unique quadratic structure.} Communications in Algebra, 25(9), (1996) 2795-2805. https://doi.org/10.1080/00927879708826023


\bibitem{Ben} Benayadi S., Makhlouf A., \emph{Hom-Lie algebras with symmetric invariant and non-degenerate bilinear forms.} Journal of Geometry and Physics, \textbf{76} (2014) 38-60. https://doi.org/10.1016/j.geomphys.2013.10.010


\bibitem{Das} Das A., Sen S., \emph{Nijenhuis operators on Hom-Lie algebras.} Communications in Algebra, \textbf{50}, No. 3, (2022) 1038-1054, https://doi.org/10.1080/00927872.2021.1977942


\bibitem{Figueroa}
Figueroa O-Farril, J.M., Stanciu S., \emph{On the structure of Symmetric self-dual Lie algebras.} J. Math. Phys., \textbf{37}, (1996), 4121-4134, 


https://doi.org 10.1063/1.531620   


\bibitem{Central}
Garc\'{i}a-Delgado R., Salgado G., S\'anchez-Valenzuela O. A., \emph{On quadratic Hom-Lie algebras with twist maps in the centroid and their relationship with quadratic Lie algebras}, Journal of Algebra, {\bf 651} (2024) 221-242, 

https://doi.org/10.1016/j.jalgebra.2024.04.006



\bibitem{HLS}
Hartwig, J.T.,  Larsson, D., Silvestrov, S.D. 
\emph{Deformations  of Lie algebras using $\sigma$-derivations\/.}
Journal of Algebra, {\bf 295} (2006) 314-361
\hfill\break
https://doi.org/10.1016/j.jalgebra.2005.07.036


\bibitem{Kath}
Kath, I., and Olbrich, M., 
{\em Metric Lie algebras and quadratic extensions.}
Transformation Groups {\bf 11: 87} (2006). https://doi.org/10.1007/s00031-005-1106-5

\bibitem{Med}
Medina A., Revoy P., \emph{Algebres de Lie et produit scalare invariant}, Ann. Scient. Ec. Norm. Sup. \textbf{18} (1985) 553-561.




\end{thebibliography}
\end{document}